\documentclass[12pt]{article}

\usepackage[T1]{fontenc}
\usepackage{lmodern}
\usepackage{fancybox}
\usepackage{amsmath,amstext,amssymb,graphicx,graphics,color}
\usepackage{cite}

\addtolength{\hoffset}{-.6cm}
\addtolength{\textwidth}{1.2cm}
\addtolength{\voffset}{-.5cm}
\addtolength{\textheight}{1.5cm}

\usepackage{theorem}
\theorembodyfont{\normalfont\slshape}
\newtheorem{thm}{Theorem}[section]
\newtheorem{proposition}[thm]{Proposition}
\newtheorem{lemma}[thm]{Lemma}
\theoremstyle{break}
\newenvironment{proof}%
{{\par\noindent \it Proof. \nobreak}}%
{\nobreak \removelastskip \nobreak \hfill $\Box$ \medbreak}
\newenvironment{proof_formal}%
{{\par\noindent \it Proof (formal). \nobreak}}%
{\nobreak \removelastskip \nobreak \hfill $\Box$ \medbreak}
\newenvironment{remark}{\par \medskip \noindent {\bf Remark. }\nobreak}{\par \medskip}



\def\paragraph#1{{\bf #1\ }}

\newcommand{\ep}{\varepsilon}
\newcommand{\ld}{\lambda}
\newcommand{\expo}{\mathrm{e}}
\newcommand{\ds}{\displaystyle}
\newcommand{\nb}{\nonumber}

\date{}

\begin{document}

\title{Numerical simulations of a non-conservative hyperbolic system with geometric
  constraints describing swarming behavior}

\renewcommand{\thefootnote}{\fnsymbol{footnote}}

\author{Sebastien Motsch\footnotemark[1] \and Laurent Navoret\footnotemark[2] \footnotemark[3]}
\footnotetext[1]{Center of Scientific Computation and Mathematical Modeling (CSCAMM), University of Maryland,
  College Park, MD 20742, USA, \url{smotsch@cscamm.upd.edu}}
\footnotetext[2]{Universit\'{e} de Toulouse, UPS, INSA, UT1, UTM, Institut de
  Math\'{e}matiques de Toulouse, F-31062 Toulouse, France, \url{laurent.navoret@math.univ-toulouse.fr}}
\footnotetext[3]{CNRS, Institut de Math\'{e}matiques de Toulouse UMR 5219,
  F-31062 Toulouse, France}

\renewcommand{\thefootnote}{\arabic{footnote}}

\maketitle

\begin{abstract}
  The Vicsek model is a very popular individual based model which describes collective
  behavior among animal societies. A macroscopic version of the Vicsek model has been
  derived from a large scale limit of this individual based model \cite{degond2007ml}. In
  this work, we want to numerically validate this Macroscopic Vicsek model (MV). To this
  aim, we compare the simulations of the macroscopic and microscopic models one with each
  other. The MV model is a non-conservative hyperbolic equation with a geometric
  constraint. Due to the lack of theory for this kind of equations, we derive several
  equivalents for this system leading to specific numerical schemes. The numerical
  simulations reveal that the microscopic and macroscopic models are in good agreement
  provided that we choose one of the proposed formulations based on a relaxation of the
  geometric constraint. This confirms the relevance of the macroscopic equation but it
  also calls for a better theoretical understanding of this type of equations.
\end{abstract}

\medskip
\noindent {\bf Key words: } Individual based model, Hyperbolic systems, Non-conservative equation,
Geometric constraint, Relaxation, Splitting scheme

\medskip
\noindent {\bf AMS subject classifications: } 35Q80, 35L60, 35L65, 35L67, 65M60, 82C22, 82C70, 82C80, 92D50


{\small \noindent {\bf Acknowledgments: } The authors wish to thank Pierre Degond for his support
and his fruitful suggestions. They also would like to thank Guy Theraulaz, Jacques Gautrais and
Richard Bon for helpful discussions. This work was supported by the French 'Agence Nationale pour la Recherche
(ANR)' in the frame of the contract ANR-07-BLAN-0208-03 entitled 'PANURGE'.}

\section{Introduction}

This paper is devoted to the numerical study of a macroscopic version of the Vicsek model
which describes swarming behavior. This macroscopic model has been derived in
\cite{degond2007ml} from the microscopic Vicsek model \cite{vicsek1995ntp}. The goal of
this work is to provide a numerical validation of the macroscopic model by comparing it
with simulations of the microscopic model.

The Vicsek model \cite{vicsek1995ntp} is widely used to describe swarming behavior such as
flock of birds \cite{ballerini2008ira}, schools of fish
\cite{aoki1982sss,reynolds1987fha,huth1992smf,couzin2002cma} (in this case the model is
combined with an attractive-repulsive force) or recently the motion of locusts
\cite{buhl2006dom}. In this model, individuals have a constant velocity and they tend to
align with their neighbors. Despite the simplicity of the model, a lot of questions remain
open about it. A first field of research concerns phase transitions within the model
depending on the level of noise
\cite{vicsek1995ntp,gregoire2004oca,nagy2007nac,chate2008cms}.
Another question arises from the long time dynamics of the model
\cite{cucker2007ebf,cucker2007fne,ha2008pka}: is there convergence to a stationary state
of the system? From another perspective, since collective displacements in natural
environment can concern up to several million individuals, it is natural to look for a
macroscopic version of the Vicsek model. On the one hand, macroscopic models constitute
powerful analytical tools to study the dynamics at large scales
\cite{mogilner1999nlm,chuang2007sta,degond2007mls}. On the other hand, the related
numerical schemes are computationally much more efficient compared with particle
simulations of a large number of interacting agents. In \cite{degond2007ml}, a Macroscopic
Vicsek model (MV) has been derived from a large scale limit of the 
microscopic Vicsek model. The macroscopic model is obtained from a rigorous perturbation theory of the
original Vicsek model. Another macroscopic model is obtained in \cite{bertin2006bah} based
on more phenomenological closure assumptions.

The MV model presents several specificities which make the model interesting. First, it
is a non-conservative hyperbolic system and secondly it involves a geometric
constraint. These are the consequences at the macroscopic level of two specificities of the
microscopic model: the total momentum is not conserved by the particle dynamics and the speed of
the particles is constant.  The first property is an intrinsic property of self-propelled
particles and the second property is a usual assumption in the models of collective
displacements \cite{vicsek1995ntp,couzin2002cma,gautrais2009afm}. Up to our knowledge the
theory of such systems is almost empty. Non-conservative systems have been studied in
the literature \cite{bouchut_nonlinear_2004,Lefloch1988ews,chen1994hcl} but none of them
involve geometric constraints.

In this work, since a theoretical framework for such systems is not available, we adopt
several approaches. First, we introduce a conservative formulation for the 1D formulation
of the MV model which is equivalent to the initial one for smooth solutions only. With this conservative
formulation, we can use standard hyperbolic theory to build Riemann problem solution and
shock capturing schemes \cite{leveque1992nmc}. The numerical scheme based on the
conservative formulation is called {\it conservative method}. But since the equivalence
with the original formulation is only valid for smooth solutions \cite{leveque2002fvm},
there is no guarantee that the conservative formulation gives the right answer at
shocks. For this reason, we introduce another formulation of the MV model where the
constraint is treated through the relaxation limit of an unconstrained conservative
system. This formulation leads to a natural numerical scheme based on a splitting between
the conservative part of the equation and the relaxation. This scheme will be referred to
as the {\it splitting method}. For comparison purposes, two
other numerical schemes are also used, an {\it upwind scheme} and a {\it
  semi-conservative} one (where only the mass conservation equation is treated in a
conservative way).

The numerical simulations of the MV model reveal that the numerical schemes all agree on
rarefaction waves but disagree on shock waves. To determine the correct solution, we use
the microscopic model in a regime where its solution is close to that of the macroscopic
model. In practice, this corresponds to regimes where the number of particles per domain
of interaction is high. The splitting method turns out to be in good agreement with
particle simulations of the microscopic model, by contrast with the other schemes. In
particular, for an initial condition with a contact discontinuity, the solution given by
the conservative form is simply a convection of the initial condition whereas the
splitting method and the particle simulations agree on a different and more complex
solution.

These results show first that the MV model well describes the microscopic model in the
dense regime. Secondly, that the correct formulation of the MV model is given by
the limit of a conservative equation with a stiff relaxation term.

The theoretical and numerical studies of the MV model highlight the specificity
of non-conservative hyperbolic models with geometric constraints. More theoretical work is
necessary in order to understand why the splitting method matches the microscopic model whereas the
other methods do not. In particular, an extension of the theory developed in \cite{chen1994hcl} to
non-conservative relaxed models would be highly desirable.

The outline of the paper is as follows: first, we present the Vicsek and MV models in
section \ref{sec:presentation_MV}. Then, we analyze the MV model and give two different
formulations of the model in section \ref{sec:macro_equa}. We develop different numerical
schemes based on these formulations and we use them to numerically solve different Riemann
problems in section \ref{sec:num_simu_cva_macro}.
Finally, we compare simulations of the microscopic model with those of the
macroscopic system in the same situations in section \ref{sec:micro_cva}. Finally, we draw
a conclusion.

\section[{Presentation of the Vicsek and Macroscopic Vicsek models}]{Presentation of the Vicsek and Macroscopic \\ \mbox{Vicsek} models}
\label{sec:presentation_MV}
\setcounter{equation}{0}

At the particle level, the Vicsek model describes the motion of particles which tend to
align with their neighbors. We denote by $x_k$ the position vector
of the $k^{th}$ particle and by $\omega_k$ its velocity with a constant
speed ($|\omega_k|=1$). To simplify, we suppose that the particles move in a plane.
Therefore $x_k\in \mathbb{R}^2$ and $\omega_k\in S^1$. The Vicsek model at the microscopic level is given by the
following equations (in dimensionless variables):
\begin{eqnarray}
  & & \frac{d x_k}{dt} =  \omega_k , \label{Cont_pos_bis} \\
  & & d \omega_k = (\mbox{Id} - \omega_k \otimes \omega_k) 
  (\bar \omega_k\,dt + \sqrt{2d}\, d \! B_t) , 
  \label{Cont_orient_bis}
\end{eqnarray}
where $\mbox{Id}$ is the identity matrix and the symbol $\otimes$ denotes the
tensor product of vectors. $d$ is the intensity of noise, $B_t$ is the Brownian motion
and $\bar \omega_k$ is the direction of mean velocity around the $\text{k}^{\text{th}}$
particle defined by:
\begin{eqnarray}
  & & \bar \omega_k = \frac{J_k}{|J_k|} \;, \qquad J_k = \!\! 
  \sum_{j,  \, |x_j - x_k| \leq R} \!\! \omega_j ,
  \label{CV_moy_3_bis} 
\end{eqnarray}
where $R$ defines the radius of the interaction region. Equation (\ref{Cont_orient_bis})
expresses the tendency of particles to move in the same direction as their neighbors. The
operator $(\mbox{Id} - \omega_k \otimes \omega_k)$ is the orthogonal projector onto the
plane perpendicular to $\omega_k$. It ensures that the speed of particles remains
constant. This model is already a modification of the original Vicsek model
\cite{vicsek1995ntp}, which is a time-discrete algorithm.

The Macroscopic Vicsek model (MV) describes the evolution of two macroscopic quantities: the
density of particles $\rho$ and the direction of the flow $\Omega$. The evolution
of $\rho$ and $\Omega$ is governed by the following equations:
\begin{eqnarray}
  &&\partial_t \rho + \nabla_x \cdot (c_1 \rho \Omega)  = 0, \label{eq:macro_rho}\\
  &&\rho \, \left( \partial_t \Omega + c_2 (\Omega \cdot \nabla_x) \Omega \right) + 
  \lambda  \,  (\mbox{Id} - \Omega \otimes \Omega) \nabla_x \rho = 0,\label{eq:macro_omega}\\
  &&|\Omega | = 1,\label{eq:macro_constraint}  
\end{eqnarray}
where $c_1$, $c_2$ and $\ld$ are some constants depending on the noise parameter $d$. The
expressions of $c_1$, $c_2$ and $\ld$ are given in appendix \ref{sec:coeff}. By contrast with the
standard Euler system, the two convection coefficients $c_1$ and $c_2$ are different. The
other specificity of this model is the constraint $|\Omega|=1$. The operator $(\mbox{Id} -
\Omega \otimes \Omega)$ ensures that this constraint is propagated provided that it is
true at the initial time. The passage from (\ref{Cont_pos_bis})-(\ref{Cont_orient_bis}) to
(\ref{eq:macro_rho})-(\ref{eq:macro_omega})-(\ref{eq:macro_constraint}) is detailed in
\cite{degond2007ml}. We note that vortex configurations are special stationary solutions
of this model in two dimensions (see appendix \ref{sec:moulin}). Up to our knowledge, this is the first
swarming model which have such analytical solutions.

\section{The Macroscopic Vicsek model}
\label{sec:macro_equa}
\setcounter{equation}{0}

\subsection{Theoretical analysis of the macroscopic model}
\label{sec:th_an}

To study model (\ref{eq:macro_rho})-(\ref{eq:macro_omega})-(\ref{eq:macro_constraint}), we first use the rescaling $x' = x/c_1$. Then
equations (\ref{eq:macro_rho})-(\ref{eq:macro_omega})-(\ref{eq:macro_constraint}) are written:
\begin{eqnarray}
  &&\ds \partial_t \rho + \nabla_{x'} \cdot (\rho \Omega)  = 0,\label{eq:macro2_rho}\\
  &&\ds \rho \, \left( \partial_t \Omega + c'(\Omega \cdot \nabla_{x'}) \Omega \right)
  +  \lambda'  \,  (\mbox{Id} - \Omega \otimes \Omega) \nabla_{x'} \rho = 0,\label{eq:macro2_omega}\\
  &&|\Omega | = 1,\label{eq:macro2_constraint}    
\end{eqnarray}
with $c' = c_2/c_1$ and $\lambda' = \lambda/c_1$. In the sequel, we drop the primes for
clarity. We refer to the appendix \ref{sec:coeff} for the computation of $c$ and $\lambda$
and we just mention that we have (see figure \ref{fig:div_c1}):
\begin{equation}
  \label{eq:coeff}
  \frac{1}{2}<c<1 \quad \text{ and }\quad \ld>0,\quad \mbox{for all } d>0. 
\end{equation}

In two dimensions, we can use a parameterization of $\Omega$ in polar
coordinates: $\Omega=(\cos \theta, \sin
\theta)^T$. Therefore, equations (\ref{eq:macro2_rho})-(\ref{eq:macro2_omega}) can be rewritten as:
\begin{eqnarray}
  && \ds \partial_t \rho + \partial_x \,(\rho \cos \theta) + \partial_y\,( \rho \sin \theta) \;\; =
  \;\; 0, \label{eq:2D_theta_rho} \\
  && \ds \partial_t \theta + c\cos \theta \partial_x \theta + c \sin \theta \partial_y \theta +
  \ld \left( -\frac{\sin \theta}{\rho} \partial_x \rho + \frac{\cos \theta}{\rho} \partial_y
    \rho \right) \;\; = \;\; 0.\label{eq:2D_theta_omega}
\end{eqnarray}
In this section, we suppose that $\rho$ and $\theta$ are independent of $y$ meaning that we are
looking at waves which propagate in the x-direction.  Under this assumption, the system
reads:
\begin{equation}
  \label{eq:2D_x_nc}
  \partial_t \left(
    \begin{array}{c}
      \rho \\
      \theta
    \end{array}
  \right) + A(\rho,\theta) \, \partial_x \left(
    \begin{array}{c}
      \rho \\
      \theta
    \end{array}
  \right) = 0,\end{equation}
with 
\begin{equation}
  \label{eq:A_1}
  A(\rho,\theta) = \left[
    \begin{array}{cc}
      \cos \theta & -\rho \sin \theta \\
      -\frac{\ld \sin \theta}{\rho} & c \cos \theta
    \end{array}
  \right].
\end{equation}
The characteristic velocities of this system are given by
\begin{equation}
  \label{eq:eigenvalues}
  \gamma_{1,2} = \frac{1}{2} \left[ (c+1)\cos \theta \pm \sqrt{(c-1)^2 \cos^2 \theta
      + 4 \ld \sin^2 \theta}\right]
\end{equation}
with $\gamma_{1}<\gamma_{2}$. Therefore, the system is strictly \emph{hyperbolic}. A possible choice of right eigenvectors is
\begin{equation}
  \label{eq:eigenvector}
  \vec{r}_1 = \left( \begin{array}{c}
      \rho \sin \theta \\
      \cos \theta - \gamma_1
    \end{array} \right) \qquad , \qquad \vec{r}_2 = \left( \begin{array}{c}
      c \cos \theta - \gamma_2\\
      \frac{\ld \sin \theta}{\rho}
    \end{array} \right) .
\end{equation}
The two fields are genuinely nonlinear except at $\theta =0$, $\theta =\pi$ and at the extrema values of
$\gamma_p$ which satisfy:
\begin{displaymath}
  \tan^2 \theta = \frac{1}{4\ld}\, \left[ \frac{((c-1)^2-4\ld)^2}{(c+1)^2} - (c-1)^2 \right].
\end{displaymath}

\begin{figure}[ht]
  \centering
  \includegraphics[scale=.8]{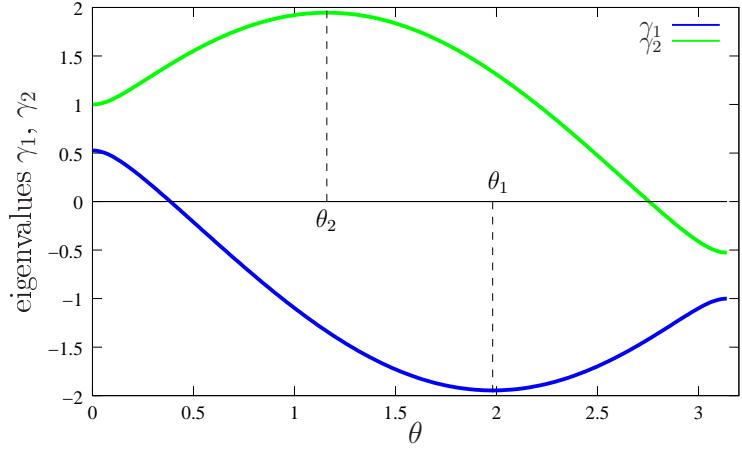}
  \caption{The two eigenvalues $\gamma_1$ and $\gamma_2$ depending on $\theta$ ($d=1$ in
    this graph). For each curve, there exists a unique extremum ($\theta_1$ and
    $\theta_2$) which corresponds to a degeneracy of the system.}
  \label{fig:vp}
\end{figure}

\noindent The Riemann invariants of the system (\ref{eq:2D_x_nc}) are given by:
\begin{eqnarray}
  \label{eq:w1}
  I_1(\rho,\theta) &=& \log \rho - \int_{\theta_0}^\theta \frac{\sin s}{\cos s -
    \gamma_1(s)}\,ds, \\
  \label{eq:w2}
  I_2(\rho,\theta) &=& \log \rho - \int_{\theta_0}^\theta \frac{c\cos s -
    \gamma_2(s)}{\ld \sin s}\,ds.
\end{eqnarray}
The integral curve $w_1$ and $w_2$ starting from $(\rho_l,\theta_l)$ are given by:
\begin{eqnarray}
  \label{eq:u1}
  \rho_1(\theta) &=& \rho_l\, \exp \left(\int_{\theta_0}^\xi \frac{\sin s}{\cos s -
      \gamma_1(s)}\,ds \right), \\
  \label{eq:u2}
  \rho_2(\theta) &=& \rho_l\, \exp \left(\int_{\theta_0}^\xi \frac{c\cos s -
      \gamma_2(s)}{\ld \sin s}\,ds \right).
\end{eqnarray}
These are the rarefaction curves. To select the physically admissible rarefaction curve,
we remark that $\gamma_{p}$ must grow from the left to right states. The proofs of these
elementary facts are omitted. The quantities $\gamma_{1,2}$ as functions of $\theta$ are
depicted in figure \ref{fig:vp}.

\subsection{A conservative form of the MV model in dimension 1}
\label{sec:cons_shock}

For non-conservative systems, shock waves are not uniquely defined
\cite{leveque2002fvm,Lefloch1988ews}. However, in the present case, a conservative
formulation of the system can be found in dimension 1. Indeed, it is an easy matter to see
that, if $\sin \theta \neq 0$, system (\ref{eq:2D_x_nc}) can be rewritten in conservative
form:
\begin{equation}
  \label{eq:1Dcons_x}
  \partial_t \left(
    \begin{array}{c}
      \rho \\
      f_1(\theta)
    \end{array}
  \right) + \partial_x \left(
    \begin{array}{c}
      \rho \cos \theta \\
      cf_2(\theta) - \ld \log(\rho)
    \end{array}
  \right)  \;\; = \;\; 0,
\end{equation}
with:
\begin{eqnarray}
  f_1(\theta) &=& \log \left|\tan \frac{\theta}{2} \right| = \log \left|\frac{\sin
      \theta }{\cos \theta + 1} \right|, \label{eq:f_1}\\
  f_2(\theta) &=& \log \left|\sin \theta \right|. \label{eq:f_2}
\end{eqnarray}
However, the functions $f_1$ and $f_2$ are singular when $\sin \theta=0$ which means
that the conservative form is only valid as long as $\theta$ stays away from $\theta =
0$.

The conservative form (\ref{eq:1Dcons_x}) leads to the following 
Rankine-Hugoniot conditions for shock waves: two states $(\rho_l,\theta_l)$ and
$(\rho_r,\theta_r)$ are connected by a shock wave traveling at a constant speed $s$ if
they satisfy:
\begin{equation}
  \label{eq:RH}
  s \left(
    \begin{array}{c}
      \rho_r - \rho_l \\
      f_1(\theta_r) - f_1(\theta_l)
    \end{array}
  \right) = \left(
    \begin{array}{c}
      \rho_r \cos \theta_r - \rho_l \cos \theta_l \\
      cf_2(\theta_r) - cf_2(\theta_l) - \ld \log \rho_r + \ld \log \rho_l
    \end{array}
  \right).
\end{equation}

We can combine the two equations of the system (\ref{eq:RH}) to eliminate the constant
$s$ and this leads to the following expression of the shock curve:
\begin{eqnarray}
  \label{eq:hugo}
  && (\rho_r\ - \rho_l)(c f_2(\theta_r) - cf_2(\theta_l) - \ld \log \rho_r + \ld \log
  \rho_l) \qquad \qquad \qquad \qquad \qquad \qquad\nb  \\
  && \qquad \qquad \qquad \qquad \qquad = (\rho_r \cos \theta_r - \rho_l \cos \theta_l) (f_1(\theta_r) -
  f_1(\theta_l)).
\end{eqnarray}
This equation must be numerically solved. The entropic part of the shock curve is
determined by the requirement that $\gamma_p$ must satisfy the Lax entropy condition.  In
figure \ref{fig:U_lmr}, we give an example of a solution of a Riemann problem obtained by
computing the intersection of the shock and rarefaction curves.

\begin{figure}[ht]
  \centering
  \includegraphics[scale=.85]{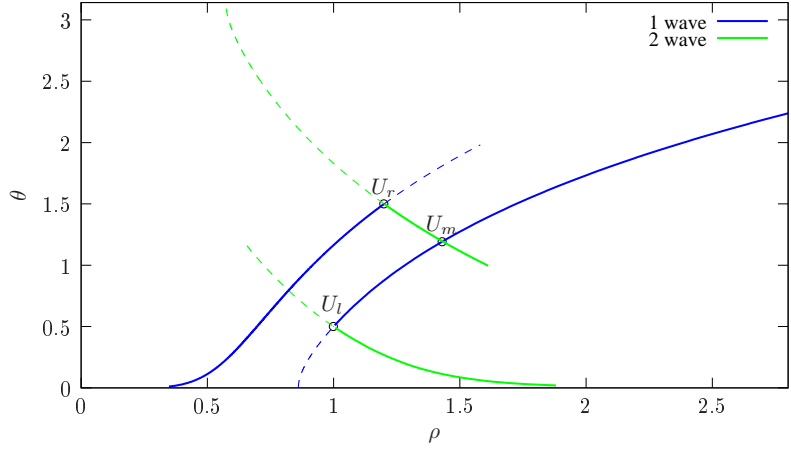}
  \caption{A solution of the Riemann problem with left and right states $U_l$ and $U_r$
    (solid line for shock waves and dotted line for rarefaction waves). In this example,
    the solution is given by two shock waves.}
  \label{fig:U_lmr}
\end{figure}

\subsection{The MV model as the relaxation limit of a conservative system}

We are going to prove that the MV model
(\ref{eq:macro2_rho})-(\ref{eq:macro2_omega})-(\ref{eq:macro2_constraint}) can be seen as
the relaxation limit of a conservative hyperbolic model with a relaxation term.  This link
will be used later to build a new numerical scheme. More precisely, we introduce the
relaxation model:
\begin{eqnarray}
  && \ds \partial_t \rho^{\ep} + \nabla_{x} \cdot (\rho^{\ep} \Omega^{\ep})  = 0,\label{eq:UMV_rho}  \\
  &&  \ds \partial_t \left(\rho^{\ep} \Omega^{\ep}\right) + c \nabla_{x} \cdot
  \left(\rho^{\ep} \Omega^{\ep} \otimes \Omega^{\ep} \right)  +  \lambda  \, \nabla_{x}
  \rho^{\ep} = \frac{\rho^\ep}{\ep} (1-|\Omega^{\ep}|^2)\Omega^{\ep}.\label{eq:UMV_omega}
\end{eqnarray}
In this model, the constraint $|\Omega|=1$ is replaced by a relaxation operator. Formally,
in the limit $\ep\rightarrow 0$, we recover the constraint $|\Omega|=1$.
\begin{proposition}
  \label{ppo:UMV_MV}
  The relaxation model (\ref{eq:UMV_rho})-(\ref{eq:UMV_rho}) converges to the MV model
  (\ref{eq:macro2_rho})-(\ref{eq:macro2_omega})-(\ref{eq:macro2_constraint}) as $\ep$ goes
  to zero.
\end{proposition}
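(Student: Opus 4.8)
The plan is to perform a formal Hilbert-type expansion in powers of $\ep$. I would write $\Omega^\ep = \Omega^0 + \ep \Omega^1 + O(\ep^2)$ and $\rho^\ep = \rho^0 + \ep \rho^1 + O(\ep^2)$, substitute into (\ref{eq:UMV_rho})-(\ref{eq:UMV_omega}), and collect terms order by order. The crucial observation is that the right-hand side of (\ref{eq:UMV_omega}) is $O(1/\ep)$, so the leading balance forces the $O(1/\ep)$ term to vanish: $\rho^0 (1 - |\Omega^0|^2)\Omega^0 = 0$, which (away from vacuum, and assuming $\Omega^0 \neq 0$) yields exactly the geometric constraint $|\Omega^0| = 1$, i.e.\ (\ref{eq:macro2_constraint}).

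Next I would extract the evolution equations. At order $\ep^0$, equation (\ref{eq:UMV_rho}) immediately gives the mass conservation law (\ref{eq:macro2_rho}) for $(\rho^0, \Omega^0)$. For the momentum equation, the subtlety is that the $O(1/\ep)$ source produces an $O(1)$ contribution once we expand $|\Omega^\ep|^2 = |\Omega^0|^2 + 2\ep\, \Omega^0 \cdot \Omega^1 + O(\ep^2) = 1 + 2\ep\, \Omega^0\cdot\Omega^1 + O(\ep^2)$, so that $\frac{\rho^\ep}{\ep}(1-|\Omega^\ep|^2)\Omega^\ep = -2\rho^0 (\Omega^0 \cdot \Omega^1)\Omega^0 + O(\ep)$. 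Hence at order $\ep^0$ the momentum equation reads
\begin{equation*}
  \partial_t(\rho^0 \Omega^0) + c\, \nabla_x \cdot (\rho^0 \Omega^0 \otimes \Omega^0) + \lambda \nabla_x \rho^0 = -2\rho^0 (\Omega^0\cdot\Omega^1)\,\Omega^0 .
\end{equation*}
The unknown Lagrange-multiplier-like term $\Omega^1$ appears only through its projection onto $\Omega^0$. To eliminate it, I would project the whole equation onto the plane orthogonal to $\Omega^0$ by applying $(\mathrm{Id} - \Omega^0\otimes\Omega^0)$, which kills the right-hand side entirely. Then, using the constraint $|\Omega^0|=1$ (so $\Omega^0 \cdot \partial_t \Omega^0 = 0$ and $\Omega^0\cdot(\Omega^0\cdot\nabla_x)\Omega^0 = 0$), one expands the derivatives, uses the mass equation to cancel the $\partial_t \rho^0$ and $\nabla_x\cdot(\rho^0\Omega^0)$ pieces, and is left with
\begin{equation*}
  \rho^0\big(\partial_t \Omega^0 + c(\Omega^0\cdot\nabla_x)\Omega^0\big) + \lambda (\mathrm{Id} - \Omega^0\otimes\Omega^0)\nabla_x \rho^0 = 0,
\end{equation*}
which is precisely (\ref{eq:macro2_omega}). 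Dropping the superscripts gives the MV model.

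The main obstacle is that this is only a \emph{formal} convergence argument: rigorously justifying the expansion (controlling the remainder, handling possible loss of hyperbolicity or shock formation in the relaxed system, checking a subcharacteristic-type stability condition in the spirit of \cite{chen1994hcl}) is exactly the kind of analysis the introduction admits is missing. So I would present the proof as a formal asymptotic computation, emphasizing the two algebraic mechanisms — the $O(1/\ep)$ balance producing the constraint, and the orthogonal projection $(\mathrm{Id}-\Omega^0\otimes\Omega^0)$ removing the stiff source while regenerating the $(\mathrm{Id}-\Omega\otimes\Omega)\nabla_x\rho$ term — and note that the vacuum set $\{\rho^0 = 0\}$ and the degenerate directions $\Omega^0 = 0$ are excluded. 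One should also double-check consistency at the next order (that $\Omega^0\cdot\Omega^1$ is genuinely determined by the $O(\ep)$ equations, so the closure is not over- or under-determined), but this is a routine verification rather than a genuine difficulty.
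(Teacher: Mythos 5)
Your formal Hilbert-expansion argument is correct and is essentially the paper's own proof in different notation: both derive the constraint $|\Omega^0|=1$ from the $O(1/\ep)$ balance and then exploit the fact that the stiff source term is parallel to $\Omega^0$ --- you eliminate it by applying $(\mathrm{Id}-\Omega^0\otimes\Omega^0)$, while the paper equivalently determines the resulting multiplier $\alpha$ by taking the scalar product with $\Omega^0$ and invoking mass conservation, arriving at the same non-conservative momentum equation. Both computations are formal (the paper labels its proof ``formal'' for exactly the reasons you list), so your proposal matches the intended argument.
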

\begin{proof_formal}
  We define $R^\ep=\rho^\ep (1-|\Omega^{\ep}|^2)\Omega^{\ep}$. Suppose that as $\ep$ goes to zero:
  \begin{equation}
    \label{eq:UMV_cv}
    \rho^{\ep} \stackrel{\ep \rightarrow 0}{\longrightarrow} \rho^0 \quad ,
    \quad \Omega^{\ep} \stackrel{\ep \rightarrow 0}{\longrightarrow}\Omega^0.
  \end{equation}
  Then $R^\ep \stackrel{\ep \rightarrow 0}{\longrightarrow} 0$, which generically implies
  that $|\Omega^{0}|^2=1$ (except where $\rho^{0}\Omega^{0} = 1$ which one assumes to be a
  negligible set). Therefore, we have:
  \begin{equation}
    \label{eq:om_0}
    \partial_t \Omega^0 \cdot \Omega^0=0 \quad , \quad (\Omega^0 \cdot \nabla_x)\Omega^0 \,\cdot \Omega^0 =0.
  \end{equation}
  Then since $R^\ep \times \Omega^\ep=0$, we have:
  \begin{displaymath}
    (\partial_t \left(\rho^{\ep} \Omega^{\ep}\right) + c \nabla_{x} \cdot
    \left(\rho^{\ep} \Omega^{\ep} \otimes \Omega^{\ep} \right)  +  \lambda  \, \nabla_{x}
    \rho^{\ep})\times \Omega^\ep = 0.
  \end{displaymath}
  and consequently when $\ep\rightarrow 0$:
  \begin{equation}
    \label{eq:om_0_alpha}
    \partial_t \left(\rho^{0} \Omega^{0}\right) + c \nabla_{x} \cdot
    \left(\rho^{0} \Omega^{0} \otimes \Omega^{0} \right)  +  \lambda  \, \nabla_{x}
    \rho^{0} = \alpha \Omega^0, 
  \end{equation}
  for a real number $\alpha$ to be determined. Taking the scalar product of
  (\ref{eq:om_0_alpha}) with $\Omega^0$ and using (\ref{eq:om_0}), we find:
  \begin{equation*}
    \alpha = \partial_t \rho^{0} + c \nabla_{x} \cdot (\rho^{0} \Omega^{0}) + \ld \nabla_{x}
    \rho^{0} \cdot \Omega^0.
  \end{equation*}
  Using the conservation of mass ($\partial_t \rho^0 = -\nabla_x \cdot (\rho^0
  \Omega^0)$), we finally have:
  \begin{displaymath}
    \alpha = (c-1) \nabla_{x} \cdot (\rho^{0} \Omega^{0}) + \ld \nabla_{x}
    \rho^{0} \cdot \Omega^0.
  \end{displaymath}
  Therefore, the relaxation term satisfies:
  \begin{displaymath}
    \frac{1}{\ep} R^\ep = [(c-1) \nabla_{x} \cdot (\rho^{0} \Omega^{0}) + \ld \nabla_{x}
    \rho^{0} \cdot \Omega^0] \Omega^0 + O(\ep).
  \end{displaymath}
  Inserting in (\ref{eq:UMV_rho})-(\ref{eq:UMV_omega}) and taking the limit $\ep
  \rightarrow 0$, we recover the MV model (\ref{eq:macro2_rho})-(\ref{eq:macro2_omega}) at
  the first order in $\ep$.

\end{proof_formal}  

\begin{remark}
  As for the MV model, we can also analyze the hyperbolicity of the left hand side of
  (\ref{eq:UMV_rho})-(\ref{eq:UMV_omega}). The eigenvalues are given by:
  \begin{displaymath}
    \gamma_1 = c u -\sqrt{\Delta} \quad , \quad \gamma_2 = c u \quad , \quad \gamma_3 = c
    u + \sqrt{\Delta},
  \end{displaymath}
  where $u$ denotes the $x$-coordinate of $\Omega$ and $\Delta = \ld - (c-c^2) u^2$. The
  system is hyperbolic if and only if $|u|< \sqrt{\frac{\ld}{c-c^2}}$. As we can see in
  figure \ref{fig:hyp_UMV2}, for $u^2=1$, $\Delta$ is positive for any values of the noise
  parameter $d$. In particular, this implies that the relaxation model is hyperbolic for
  every $|u| \leq 1$.
\end{remark}

\begin{figure}[ht]
  \centering
  \includegraphics[scale=.7]{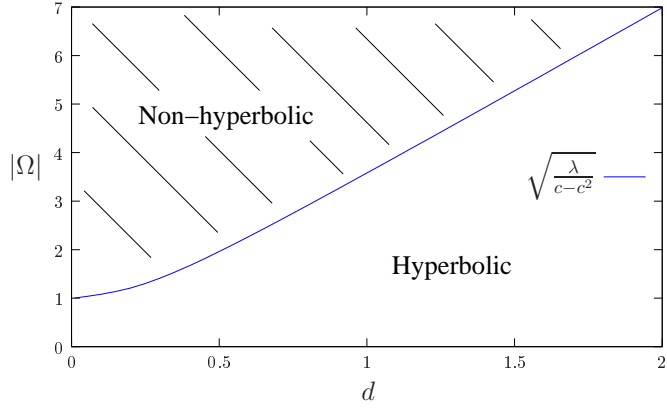}
  \caption{The quantity $\sqrt{\ld/(c-c^2)}$ depending on $d$. The relaxation model (\ref{eq:UMV_rho})-(\ref{eq:UMV_rho}) is
    hyperbolic when the speed $|\Omega|$ is below this curve. At the limit $\ep \rightarrow 0$,
    $|\Omega^\ep | \rightarrow 1$ and therefore the relaxation model is hyperbolic for any $d$ in this
    limit.}
  \label{fig:hyp_UMV2}
\end{figure}

\section{Numerical simulations of the MV model}
\label{sec:num_simu_cva_macro}

\subsection{Numerical schemes}
\label{sec:diff_scheme_non_cons}

We propose four different numerical schemes to solve the MV model. The first two schemes
originate from the discussions of the previous section, the two other one are based on the
non-conservative form of the MV model. \\
We use the following notations: we fix a uniform stencil $(x_i)_i$ (with
$|x_{i+1}-x_i|=\Delta x$) and a time step $\Delta t$. We denote by $U_i^n = (\rho_i^n,\,
\theta_i^n)$ the value of the mass and flux direction at the position $x_i$ and at time
$n\Delta t$.

\subsubsection{The conservative scheme}

Here we use the conservative form of the MV model (\ref{eq:1Dcons_x}):
\begin{equation}
  \label{eq:V}
  \partial_t V + \partial_x F(V) = 0,
\end{equation}
with $V=(\rho,f_1(\theta))^T$ and $F(V) = (\rho \cos \theta,\, cf_2(\theta) - \ld
\log(\rho))^T$. We use a Roe method to discretize this equation:
\begin{equation}
  \label{eq:cons}
  \frac{V_i^{n+1} - V_i^n}{\Delta t} + \frac{\widehat{F}_{i+} - \widehat{F}_{i-}}{\Delta x} = 0,
\end{equation}
where the intermediate flux $\widehat{F}_{i+}$ is given by:
\begin{equation}
  \label{eq:flux_c}
  \widehat{F}_{i+} = \frac{F(V_i) + F(V_{i+1})}{2}\, -
  \left|\mathcal{A}(\overline{V}_{i+})\right| \frac{V_{i+1} - V_{i}}{2},
\end{equation}
and $\mathcal{A}$ is the Jacobian of the flux $F$:
\begin{equation}
  \label{eq:A_c}
  \mathcal{A}(V) = DF(V) = \left[ \begin{array}{cc}
      \cos \theta & -\rho \sin^2 \theta \\
      -\frac{\ld}{\rho} & c \cos \theta
    \end{array}  \right] 
\end{equation}
calculated at the mean value $\overline{V}_{i+} = \frac{V_i+V_{i+1}}{2}.$

As mentioned earlier, the conservative form is only valid when $\theta$ does not cross a
singularity $\theta=0$ or $\theta = \pi$ (i.e. $\sin \theta=0$). Nevertheless, numerically
we can still use the formulation (\ref{eq:cons}) when $\theta$ changes sign. Moreover,
since $f_1$ is an even function, this only gives $|\theta^{n+1}|$.  To determine the sign
of $\theta^{n+1}$, we use an auxiliary value $\widehat{\theta}$ which we update with the
upwind scheme (\ref{eq:upwind}). The sign of $\theta$ is then determined using the sign of
$\widehat{\theta}$.

\subsubsection{The splitting method}
\label{sec:splitting_method}

The next scheme uses the relaxation model (\ref{eq:UMV_rho})-(\ref{eq:UMV_omega}). The
idea is to split the relaxation model in two parts, first the conservative part:
\begin{equation}
  \label{eq:UMV_cons}
  \begin{array}{ll}
    & \ds \partial_t \rho + \nabla_{x} \cdot (\rho \Omega)  = 0,  \\
    &  \ds \partial_t \left(\rho \Omega\right) + c \nabla_{x} \cdot
    \left(\rho \Omega \otimes \Omega \right)  +  \lambda  \, \nabla_{x}
    \rho = 0.
  \end{array}
\end{equation}
and then the relaxation part:
\begin{equation}
  \label{eq:UMV_relax}
  \begin{array}{ccl}
    \ds \partial_t \rho  & = & 0,  \\
    \ds \partial_t \left(\rho \Omega\right) & = & \ds \frac{\rho}{\ep} (1-|\Omega|^2)\Omega.
  \end{array}
\end{equation}
This system reduces to: $\partial_t \Omega = \frac{1}{\ep} (1-|\Omega|^2)\Omega.$ Since
this equation only changes the vector field $\Omega$ in norm (i.e. $\partial_t \Omega
\cdot \Omega^\perp =0$), we can once again reduce this equation to:
\begin{equation}
  \label{eq:omega2}
  \frac{1}{2} \partial_t |\Omega|^2 = \frac{1}{\ep} (1-|\Omega|^2)|\Omega|^2.
\end{equation}
Equation (\ref{eq:omega2}) can be explicitly solved: $|\Omega|^2 \!=\!
(1 + C_0\,\expo^{-2/\ep\,t})^{-1}$
with $C_0 \!=\! \left(\frac{1}{|\Omega_0|^2} -1\right)$. We indeed take the limit $\ep \rightarrow 0$ of this
expression and replace the relation term by a mere normalization:
$\Omega\rightarrow \Omega/|\Omega|$ .

The conservative part is solved by a Roe method with a Roe matrix computed following
\cite{leveque2002fvm} page 156.

\subsubsection{Non-conservative schemes}

We present two other numerical schemes based on the
non-conservative formulation of the MV model.

\medskip

\noindent {\bf (i) Upwind scheme}

The method consists to update the value of $U_i^n$ with the formula:
\begin{equation}
  \label{eq:upwind}
  \frac{U_i^{n+1} - U_i^n}{\Delta t} + A^+ \left( \frac{U_i^n - U_{i-1}^n}{\Delta t}
  \right) + A^- \left( \frac{U_{i+1}^n - U_i^n}{\Delta t} \right) = 0,
\end{equation}
where $A^+$ and $A^-$ are (respectively) the positive and negative part of $A$, defined such that
$A = A^+ - A^-$ and $|A| = A^+ + A^-$ and $A^+,\,A^-$ are computed using an explicit
diagonalization of $A$.

\noindent {\bf (ii) Semi-conservative scheme}

One of the problem with the upwind scheme is that it does not conserve the total mass ($\int_x
\rho(x)\,dx$). In order to keep this quantity constant in time, we use
the equation of conservation of mass (\ref{eq:macro2_rho}) in a conservative form:
\begin{equation}
  \label{eq:rho_c}
  \partial_t \rho + \partial_x H(\rho,\theta) = 0,
\end{equation}
with $H(\rho,\theta) = \rho \cos \theta$. Therefore, a conservative numerical scheme
associated with this equation would be:
\begin{equation}
  \label{eq:rho_c_num}
  \frac{\rho_i^{n+1} -\rho_i^n}{\Delta t} + \frac{\widehat{H}_{i+1/2} - \widehat{H}_{i-1/2}}{\Delta x} = 0,
\end{equation}
where $\widehat{H}_{i+}$ is the numerical estimation of the flux $H$ at the interface
between $x_i$ and $x_{i+1}$. To estimate numerically this flux, we use the following
formula with $U_{i}=(\rho_{i},\theta_{i})$:
\begin{equation}
  \label{eq:H_num}
  \widehat{H}_{i+1/2} = H(U_{i+1/2}) - \left|A\right|_{\rho}
  \left(\frac{U_{i+1}^n-U_i^{n}}{2}\right),
\end{equation}
where the intermediate value is given by $U_{i+1/2}=\frac{U_{i}^n + U_{i+1}^{n}}{2}$ and
$\left|A\right|_{\rho}$ is the first line of the absolute value of $A$.

For the estimation of the angle $\theta$, we keep the same scheme as for the upwind
scheme. This numerical scheme uses one conservative equation (for the mass $\rho$) and a
non-conservative equation (for the angle $\theta$). It is thus referred to as the
\textit{semi-conservative} scheme.

\subsection{Numerical simulations}

To compare the various numerical schemes, we use a Riemann problem as initial
condition. We choose solutions which consist of a rarefaction wave (figure
\ref{fig:rare11}) or a single shock wave (figures \ref{fig:choc1}-\ref{fig:choc111}).

We take the following parameters: $d = 1$, the length of the domain is
$10$ units and the discontinuity for the Riemann problem is at $x=5$ (the middle of
the domain). The simulations are run during two time units with a time step $\Delta
t=2.10^{-2}$ and a space step $\Delta x = 5.10^{-2}$. For these values, the Courant number
($C_n$) is $0.778$. We use homogeneous Neumann conditions as boundary conditions.

For the rarefaction wave, we take:
\begin{equation}
  \label{eq:riemann_rare}
  (\rho_l,\theta_l)=(2,1.7) \quad , \quad (\rho_r,\theta_r) = (1.12,0.60).
\end{equation}
All the numerical schemes capture well the theoretical solution (see figure \ref{fig:rare11}).

For the shock wave, we choose:
\begin{equation}
  \label{eq:riemann_shock}
  (\rho_l,\theta_l)=(1,1.05) \quad , \quad (\rho_r,\theta_r) = (1.432\,,\,1.7).
\end{equation}
For these values, the shock speed computed with (\ref{eq:hugo}) is $s=-1.585$. The results
of the numerical simulations using the four schemes are given in figure
\ref{fig:choc1}. The numerical solutions are in accordance with the theoretical solution
given by the conservative formulation for all the numerical schemes. Nevertheless, the
conservative scheme is in better accordance with this solution. For the
other schemes, the shock speed differs slightly.\\
A second example of a shock wave is computed using the following initial condition:
\begin{equation}
  \label{eq:riemann_shock2}
  (\rho_l,\theta_l)=(1,0.314) \quad , \quad (\rho_r,\theta_r) = (2,1.54).
\end{equation}
The solutions given by the $4$ numerical
schemes are very different. Only the conservative method is in agreement with the
solution given by the conservative formulation. But the conservative formulation is not
necessary the right one.  Indeed, in the next section, particle simulations show that the right
solution is not given by the conservative formulation but rather by the splitting method.

\vspace{1cm}

\begin{figure}[ht]
  \centering
  \includegraphics[scale=1]{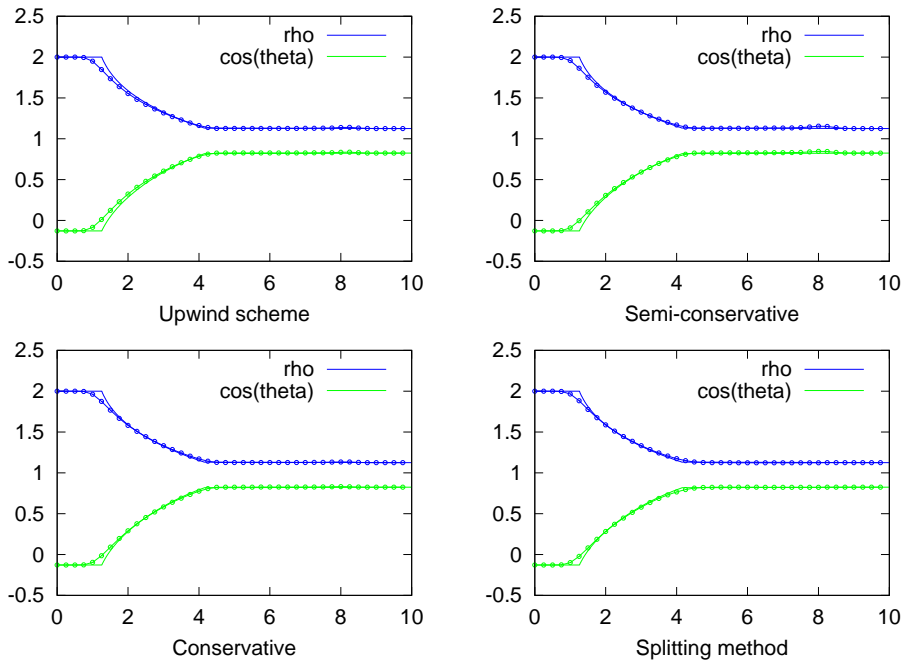}
  \caption{The theoretical solution of the Riemann problem (\ref{eq:riemann_rare}) given
    by a rarefaction curve (solid line) and the numerical solutions (points), $\rho$
    (blue) and $\cos \theta$ (green) as functions of space. The simulations are run during
    2 time units, with a time step $\Delta t= 2.10^{-2}$ and a space step $\Delta x =
    5.10^{-2}$ (CFL=$.778$).}
  \label{fig:rare11}
\end{figure}

\begin{figure}[p]
  \centering
  \includegraphics[scale=1]{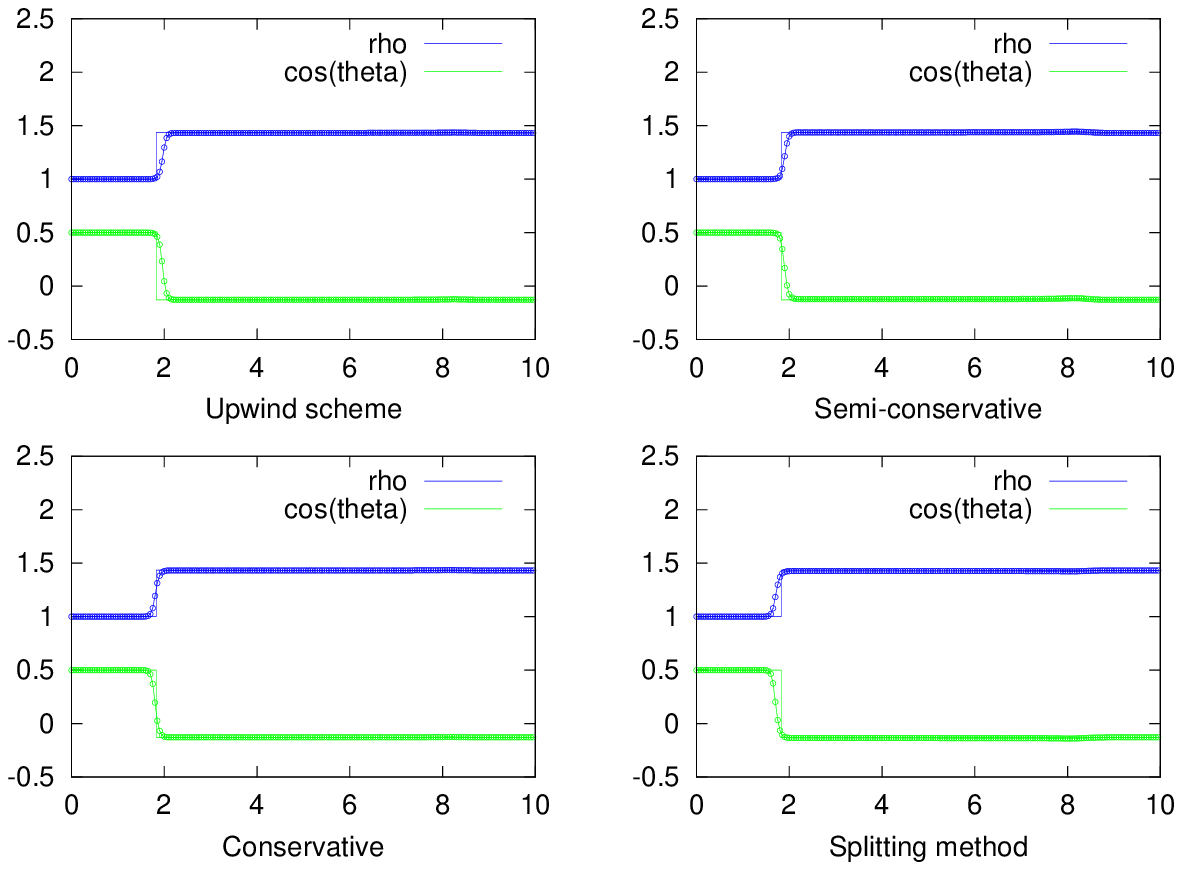}
  \caption{Theoretical and numerical solutions of the Riemann problem (\ref{eq:riemann_shock})}
  \label{fig:choc1}
\end{figure}

\begin{figure}[p]
  \centering
  \includegraphics[scale=1]{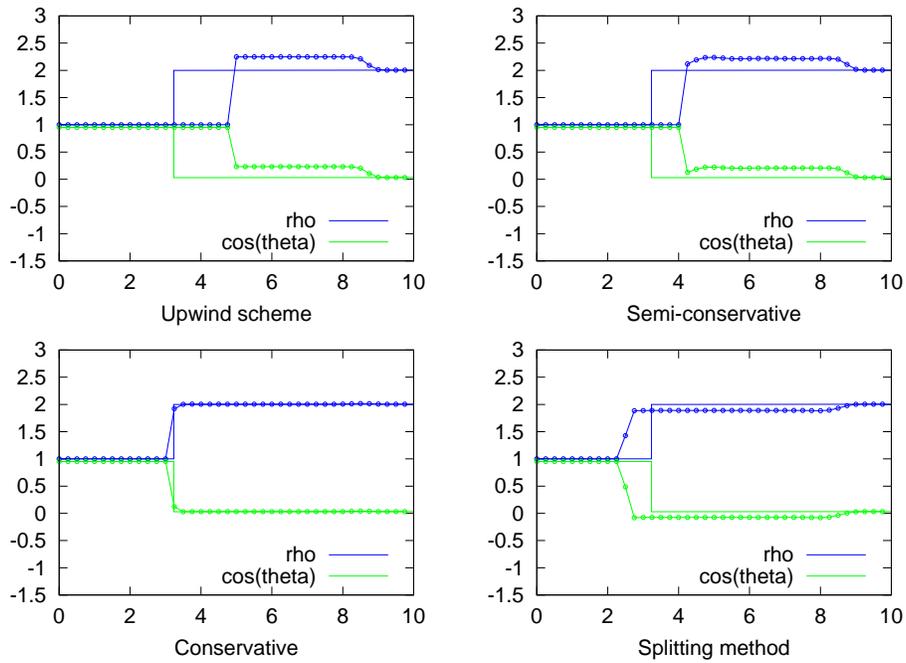}
  \caption{Theoretical and numerical solutions of the Riemann problem (\ref{eq:riemann_shock2})}
  \label{fig:choc111}
\end{figure}

\clearpage

\section[The microscopic versus macroscopic Vicsek models]{The microscopic versus macroscopic Vicsek \\ models}
\label{sec:micro_cva}
\setcounter{equation}{0}

\subsection{Local equilibrium}

In this part, we would like to validate the macroscopic Vicsek model by the simulation of
the microscopic Vicsek. The macroscopic model relies on the fact that the particle
distribution function is at local equilibrium given by a Von Mises distribution $M_\Omega$
(see \cite{degond2007ml}):
\begin{equation}
  \label{eq:M_von_mises}
  M_{\Omega}(\omega) = C \exp\left(\frac{\omega \cdot \Omega}{d}\right)
\end{equation}
where $C$ is set by the normalization condition\footnote{explicitly given by
  $C^{-1}=2\pi\,I_0(d^{-1})$ where $I_0$ is the modified Bessel function of order 0}. The
goal of this section is to show numerically that the particle distribution of the
microscopic Vicsek model is close in certain regimes to this Von Mises distribution.

To this aim, in appendix \ref{sec:num_scheme_ps} we propose a numerical scheme to solve
system (\ref{Cont_orient_bis}). The setting for our particle simulations is as follows: we
consider a square box with periodic boundary conditions. As initial condition for the
position $x_i$, we choose a uniform random distribution in space. The velocity is
initially distributed according to a uniform distribution on the unit circle.

During the simulation, we compute the empirical distribution of the velocity direction $\theta$
and of the mean velocity $\Omega$ of particles. We then compare this empirical
distribution with its theoretical distribution $M_\Omega(\theta)$ given by
(\ref{eq:M_von_mises}). In figure \ref{fig:distri_speed}, we give an example of a
comparison between the distribution of the velocity direction $\theta$ and the theoretical
distribution $M_\Omega$ predicted by the theory.

\begin{figure}[p]
  \hspace{-1cm}
  \begin{minipage}[b]{0.5\linewidth}
    \includegraphics[scale=.64]{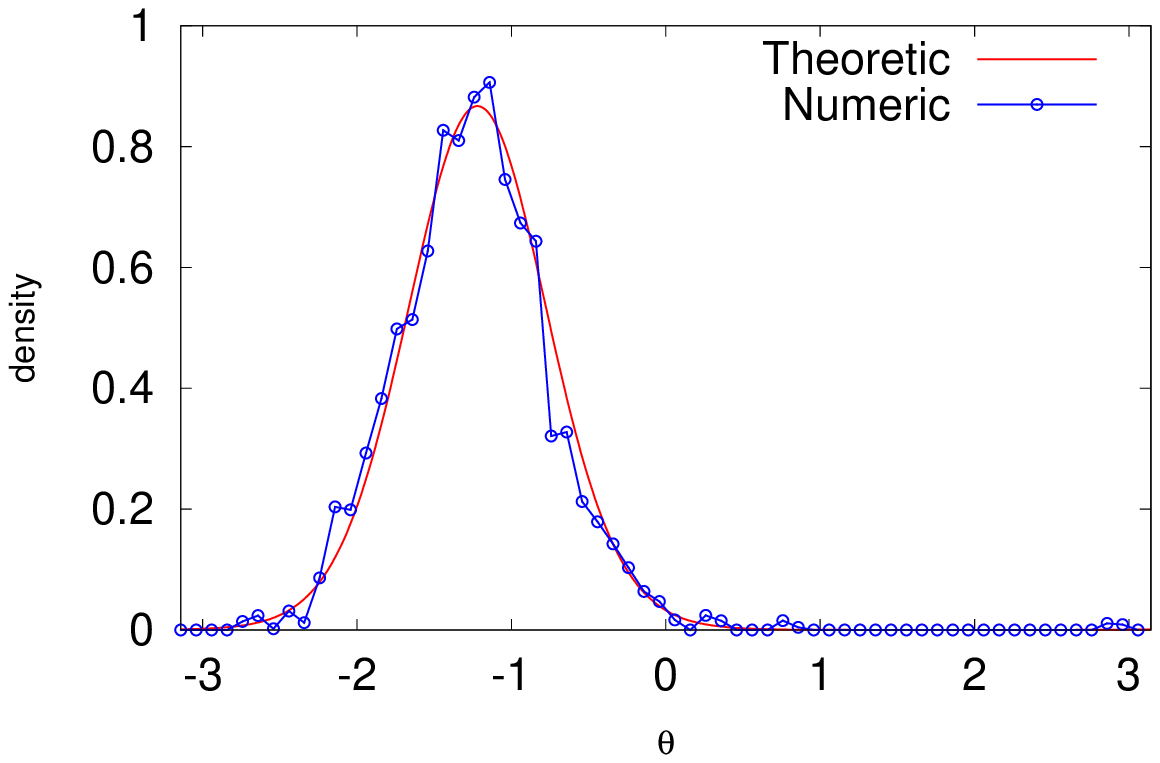}
  \end{minipage} 
  \begin{minipage}[b]{0.5\linewidth}
    \includegraphics[scale=.7]{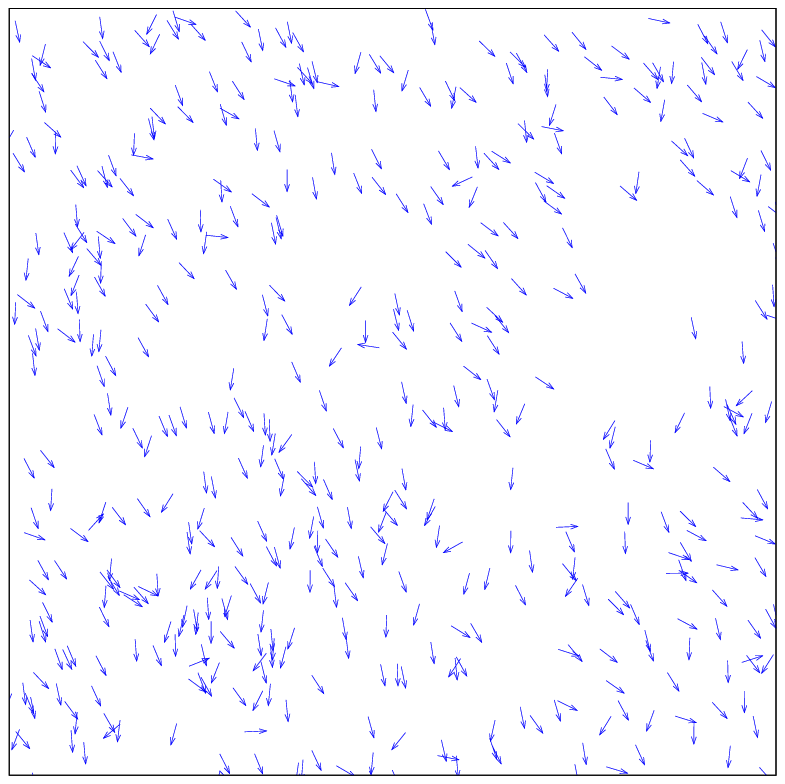}
  \end{minipage}
  \caption{Left figure: the distribution of velocity direction $\theta$ (with $\omega =
    (\cos \theta,\,\sin \theta)$) compared with its theoretical distribution after $6$
    time units of simulation. Right figure: the corresponding particle
    simulation. Parameters of the simulation: $Lx=1$, $Ly=1$ (domain size), number of
    particles $N=500$, $\ep=1/4$, $R=.5$, $d=.2$, $\Delta t=2.10^{-3}$.}
  \label{fig:distri_speed}
\end{figure}
\begin{figure}[p]
  \centering
  \includegraphics[scale=.8]{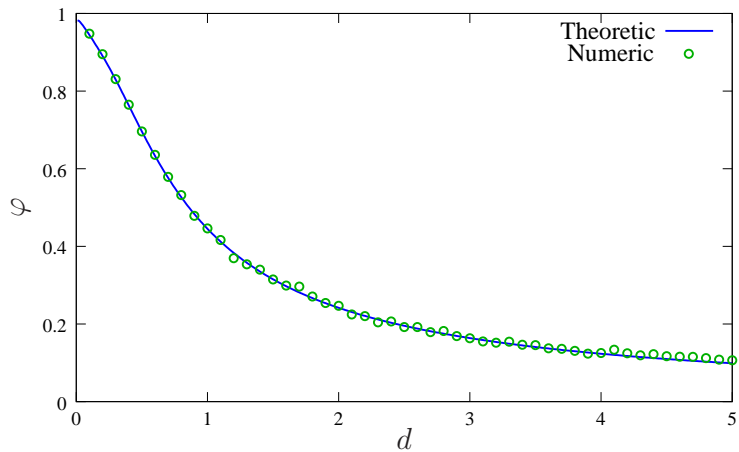}
  \caption{The mean velocity $\varphi$ (\ref{eq:total_flux}) for different values of
    $d$. Parameters of the simulation: $Lx=1$, $Ly=1$ (domain size), number of particles
    $N=200$, radius of interaction $R=.5$, $\Delta t=.02$ unit time, the simulations are
    run during $180$ unit time.}
  \label{fig:Mean_speed}
\end{figure}

Since the distribution of velocity converges, we have a theoretical value for the mean
velocity. We denote by $\varphi_N$ the mean velocity of particles and $\varphi$ the theoretical
value given by the stationary distribution:
\begin{equation}
  \label{eq:total_flux}
  \varphi_N = \frac{1}{N} \left|\sum_{k=1}^N \omega_k\right| \quad , \quad 
  \varphi = \left|\int_{\omega} \omega\,M_{\Omega}(\omega) \,d\omega \right|.
\end{equation}
At least locally in $x$, we have  that $\varphi_N \stackrel{\epsilon \rightarrow
  0}{\longrightarrow} \varphi$. In figure \ref{fig:Mean_speed}, we compare the two
distributions for different values of the noise $d$ and we can see that the two distributions are
in good agreement. We also observe a smooth transition from order ($\varphi \approx 1$) to
disorder ($\varphi <<1 $) as it has been measured in the original Vicsek model \cite{vicsek1995ntp}.

The situation is different when we look at a larger system. We still have convergence of
the velocity distribution of particles to a local equilibrium
$\rho(x)\,M_{\Omega(x)}(\omega)$, but the mean direction $\Omega(x)$ now depends on
$x$. Therefore the mean velocity of the particles in all the domain differs from the
expected theoretical value (\ref{eq:M_von_mises})-(\ref{eq:total_flux}). We illustrate
this phenomena in figure \ref{fig:Mean_speed_diff}: we fix the density of particles and we
increase the size of the box. As we can observe, the mean velocity $\varphi_N$
(\ref{eq:total_flux}) has a smaller value when the size of the box increases. This
phenomena has been previously observed in \cite{chate2008cms}. The mean velocity
$\varphi_N$ can also differ from the expected theoretical value $\varphi$
(\ref{eq:M_von_mises})-(\ref{eq:total_flux}) when the density of particles is low. In
figure \ref{fig:Mean_speed_diff2}, we fix the size of the box ($L=10$) and we increase the
density of particles (the density is given by the number of particles inside the circle of
interaction). At low density, the mean velocity $\varphi_N$ is much more smaller than the
theoretical prediction $\varphi$. But as the density of particles increases, the mean
velocity $\varphi_N$ grows (see also \cite{vicsek1995ntp}) and moreover $\varphi_N$
converges to $\varphi$. Because of that, a dense regime of particles has to be used in the
following in order to numerically compare the microscopic model with the MV model.

\begin{figure}[p]
  \centering
  \includegraphics[scale=.7]{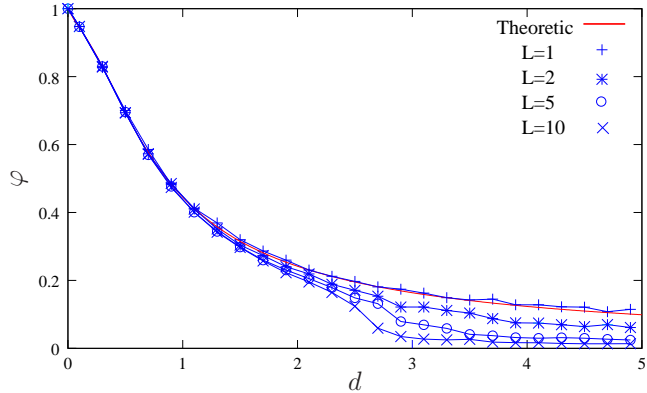}
  \caption{The mean velocity $\varphi$ (\ref{eq:total_flux}) for different values of
    $d$. We use different domain sizes and we keep the same density of particles. As the
    domain size increases, the total flux $\varphi$ decreases which means that particles
    are less aligned globally. Parameters of the simulations: $L=1,\,2,\,5,\,10$ (domain
    size), number of particles $N=200,\,800,\,5000,\,20000$, radius of interaction $R=.5$,
    $\Delta t=.02$ time units, the simulations are run during $180$ time units.}
  \label{fig:Mean_speed_diff}
\end{figure}
\begin{figure}[p]
  \centering
  \includegraphics[scale=.7]{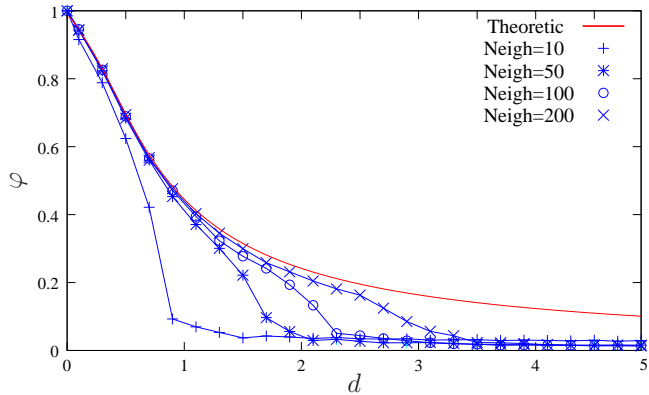}
  \caption{The mean velocity $\varphi$ (\ref{eq:total_flux}) for different values of
    $d$. We change the density of particles (given by the mean number of neighbors in
    unit of radius of interaction). When we increase the mean number of neighbors,
    particles are more aligned. Parameters of the simulations: $L=10$ (domain size),
    number of particles $N=254,\,1273,\,6366,\,12732,$ and $25464$, radius of interaction
    $R=.5$, $\Delta t=.02$ time units, the simulations are run during $180$ time units.}
  \label{fig:Mean_speed_diff2}
\end{figure}

\subsection{Microscopic versus Macroscopic dynamics}

We now compare the evolution of the two macroscopic quantities $\rho$ and $\Omega$ for the
two models. We have seen that the different schemes applied to the macroscopic equation
can give different solutions (see figure \ref{fig:choc111}). Therefore, we expect that
particle simulations will indicate what is the physically relevant solution of the macroscopic
equation.

We first briefly explain how we proceed to run the particle simulations of a Riemann
problem (see also appendix \ref{sec:num_scheme_ps}). First, we have to choose a left state
$(\rho_l,\theta_l)$, a right state $(\rho_r,\theta_r)$ and the noise parameter $d$. Then
we distribute a proportion $\frac{\rho_l}{\rho_l+\rho_r}$ of particles uniformly in the
interval $[0,5]$ and the remaining particles uniformly in the interval $[5,10]$. Then, we
generate velocity distribution $\theta$ for the particles according to the distribution
$M_{\Omega}$ (\ref{eq:M_von_mises}) with $\Omega_l =(\cos \theta_l,\sin \theta_l)^T$ on
the left side and $\Omega_r =(\cos \theta_r,\sin \theta_r)^T$ on the right side. We use
the numerical scheme given in appendix \ref{sec:num_scheme_ps} to generate particle
trajectories. To make the computation simpler, we choose periodic boundary
conditions. Therefore the number of particles is conserved. As a consequence, there are
two Riemann problems corresponding to discontinuities at $x=5$ and at $x=0$ or $10$ (which
is the same by periodicity). We use a particle-in-cell method
\cite{fehske2007cmp,hockney1988csu} to estimate the two macroscopic quantities: the
density $\rho$ and the direction of the flux $\Omega$ (which gives $\theta$). In order to
reduce the noise due to the finite number of particles, we take a mean over several
simulations to estimate the density $\rho$ and $\theta$ ($10$ simulations in our
examples).

In figure \ref{fig:exple_2D}, we show a numerical solution for the following Riemann
problem:
\begin{equation}
  \label{eq:IC_shock}
  (\rho_l,\,\theta_l) = (1,\,1.5) \quad , \quad (\rho_r,\,\theta_r) = (2,\,1.83) \quad
  , \quad d = 0.2
\end{equation}
using particle simulations and the macroscopic equation. We represent the solutions in a 
2D representation. Since the initial condition is such that the density $\rho$ and the
direction $\theta$ are independent of the $y$-direction, we only represent $\rho$ and
$\theta$ along the $x$-axis in the following figures.

In figure \ref{fig:choc_d02_particle_split}, we represent the two solutions (the particle
and the macroscopic one) with only a dependence in the $x$-direction. Three quantities are
represented: the density (blue), the flux direction $\theta$ (green) and the variance of the
angle distribution (red). The macroscopic model supposes that the variance of $\theta$
should be constant everywhere. Nevertheless, we can see that the variance is larger in
regions where the density is lower. For $\rho$ and $\theta$, we see clearly the
propagation of a shock in the middle of the domain and a rarefaction at the boundary. The
CPU time for one numerical solution at the particle level is about $140$ seconds with the
parameters given in figure \ref{fig:choc_d02_particle_split}. For the macroscopic
equation, the CPU time is about $0.1$ second which represents a cost reduction of three
orders of magnitude compared with the particle simulations. Since we have to take a
mean over many particle simulations, the cost reduction is even larger.

In figure \ref{fig:choc111_particle}, we use the same Riemann problem to set the initial
position as in figure \ref{fig:choc111} both with $d=1$ (\ref{eq:riemann_shock2}). We
use a larger domain in $x$ ($L=20$ space units) in order to avoid the effect of the periodic
boundary condition. The upwind scheme and the semi-conservative method are clearly not in
accordance with the particle simulations. Moreover, the splitting method is in better
agreement with the particle simulation since the shock speed is closer to the values given
by the particle simulations than that predicted by the conservative scheme.

Finally, our last simulations concern a contact discontinuity. We simply initialize with:
\begin{equation}
  \label{eq:theta_pm}
  (\rho_l,\theta_l) = (1,1) \quad , \quad (\rho_r,\theta_r) = (1,-1) \quad , \quad d=0.2,
\end{equation}
i.e. we reflect the angle with respect to the x-axis across the middle point $x=5$.
A natural solution for this problem is the contact discontinuity propagating
at speed $c\,cos(1)$:
\begin{equation}
  \label{eq:theta_pm_sol}
  \rho(t,x)=1 \quad , \quad \theta(t,x) = \theta_0(x - c\cos(1) t),
\end{equation}
with $\theta_0(x)=-1$ when $x<5$ and $\theta_0(x)=1$ when $x>5$. This is the solution
provided by the conservative scheme (figure \ref{fig:theta_pm}). But surprisingly, the
splitting method and the particle simulation agree on a different solution. Indeed, the
solutions given by the particles and the splitting method are in fairly good agreement
with each other, which seems to indicate that the ``physical solution'' to the contact
problem (\ref{eq:theta_pm}) is not given by the conservative formulation
(\ref{eq:theta_pm_sol}) but by a much more complex profile.  The constraint of unit speed
drastically changes the profile of the solution compared with what would be found for a
standard system of conservative laws.

\begin{figure}[p]
  \hspace{-1cm}
  \begin{minipage}[b]{0.5\linewidth}
    \centering \includegraphics[scale=.7]{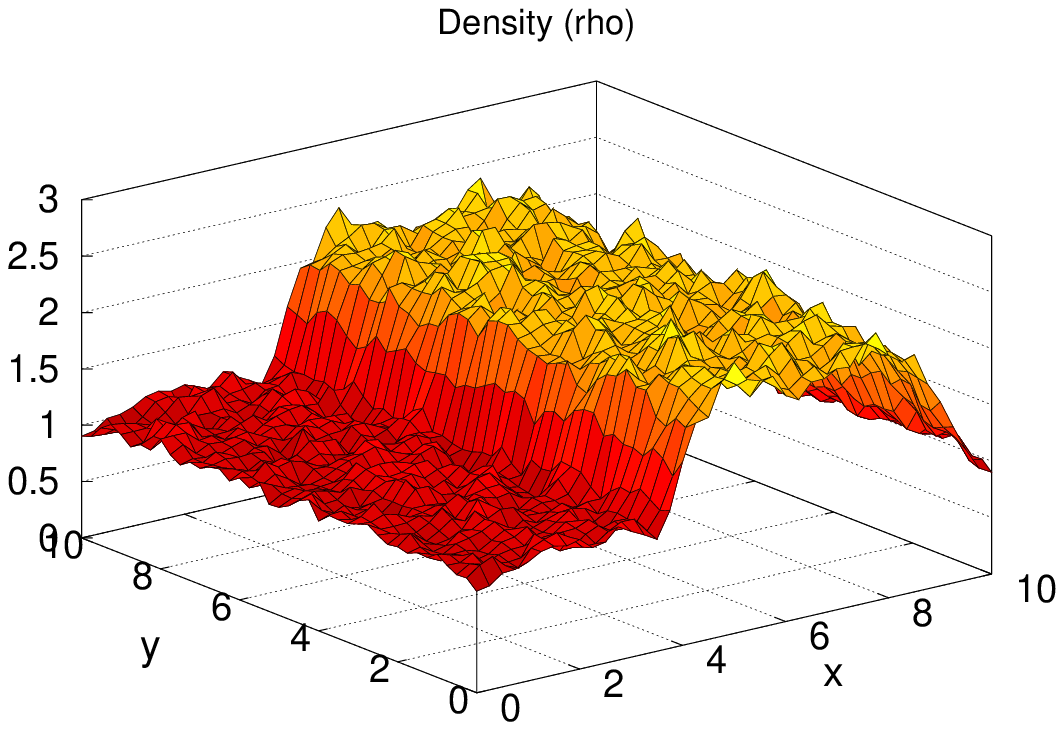}
  \end{minipage}\hfill
  \begin{minipage}[b]{0.5\linewidth}
    \centering \includegraphics[scale=.7]{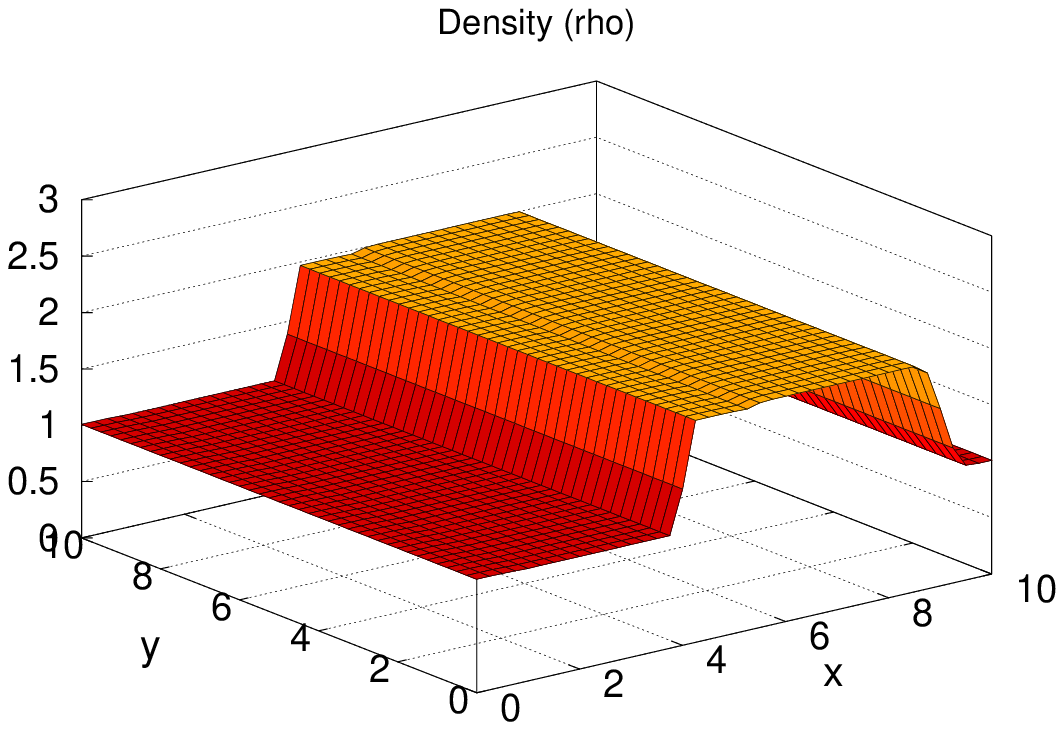}
  \end{minipage}  
  \caption{The particle density in space $\rho$ computed with particle simulations (left)
    and the macroscopic equations (right). We initialize with a Riemann problem
    (\ref{eq:IC_shock}). Numerical parameters for the particle simulations: $N=2.10^6$
    particles, $\Delta t=.01$, $\ep=1/10$, $R=.5$, $Lx=Ly=10$, we take a mean over $10$
    computations. Numerical parameters for the macroscopic model: $\Delta t=.01,\,\Delta
    x=.025$ (CFL=0.416), we use the splitting method. The simulations are run during 2
    time units.}
  \label{fig:exple_2D}
\end{figure}

\begin{figure}[p]
  \centering{\includegraphics[scale=.8]{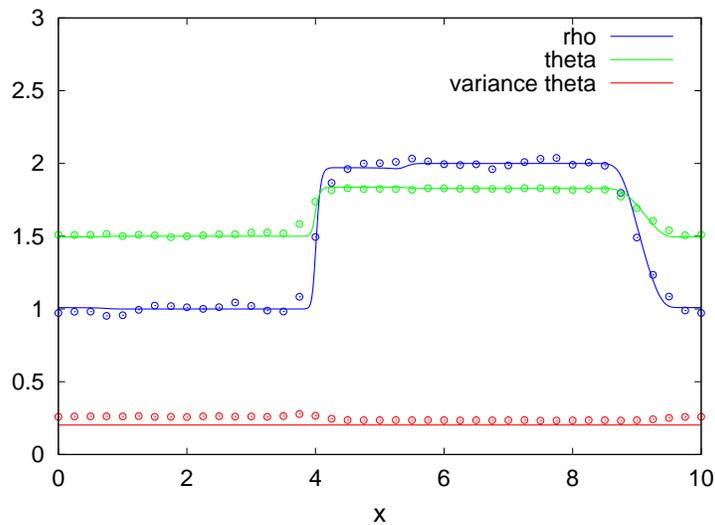}}\\
  \caption{The solution of the Riemann problem (\ref{eq:IC_shock}) with $d=.2$ computed
    with the splitting method (solid line) and with particle simulations (dots). In blue, we
    represent the density $\rho$, in green the flux direction $\theta$ and in red the
    variance of the velocity direction. The parameters are the same as in figure
    \ref{fig:exple_2D}. We only change the representation of the solution
    (1D-representation).}
  \label{fig:choc_d02_particle_split}
\end{figure}


\begin{figure}[p]
  \includegraphics[scale=1]{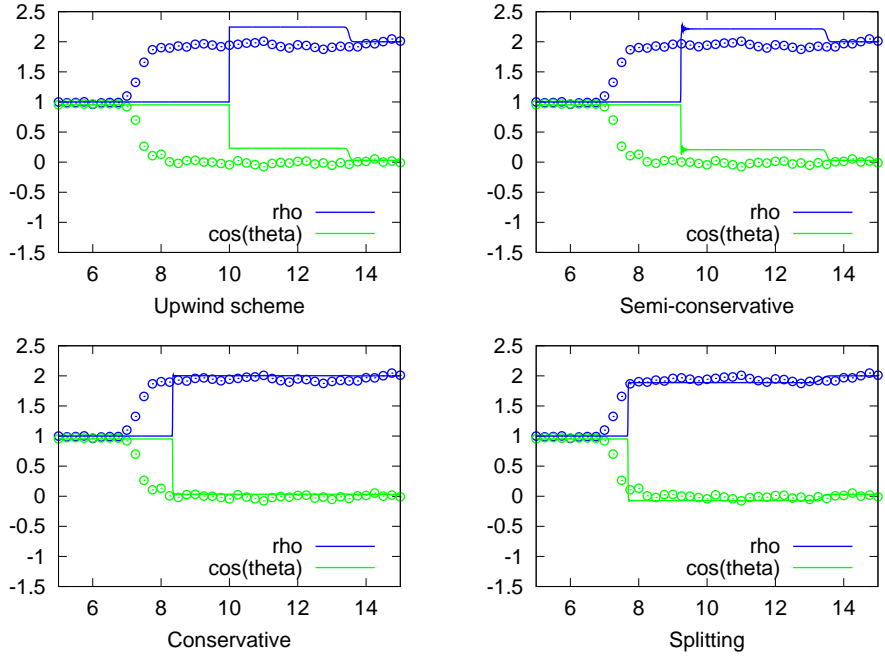}
  \caption{The solutions of the Riemann problem (\ref{eq:riemann_shock2}) with $d=1$ computed
    using the macroscopic model and particle simulations of the microscopic model (see
    figure \ref{fig:choc_d02_particle_split}).  Numerical parameters for the macroscopic
    model: $\Delta t=.01,\,\Delta x=.025$ (CFL=0.778). Numerical parameters for the
    particle simulation: $N=2.10^6$ particles, $\Delta t=.02$, $\ep=.1$, $R=.5$, $Lx=20$
    and $Ly=1$. We take a mean over $50$ simulations. The simulations are run during 6
    time units. Since $d=1$, fluctuations are higher (see figure
    \ref{fig:Mean_speed_diff}), we have to increase the density of particles to reduce
    this effect.}
  \label{fig:choc111_particle}
\end{figure}


\begin{figure}[p]
  \centering {\it Conservative method} \\
  \centering \includegraphics[scale=.8]{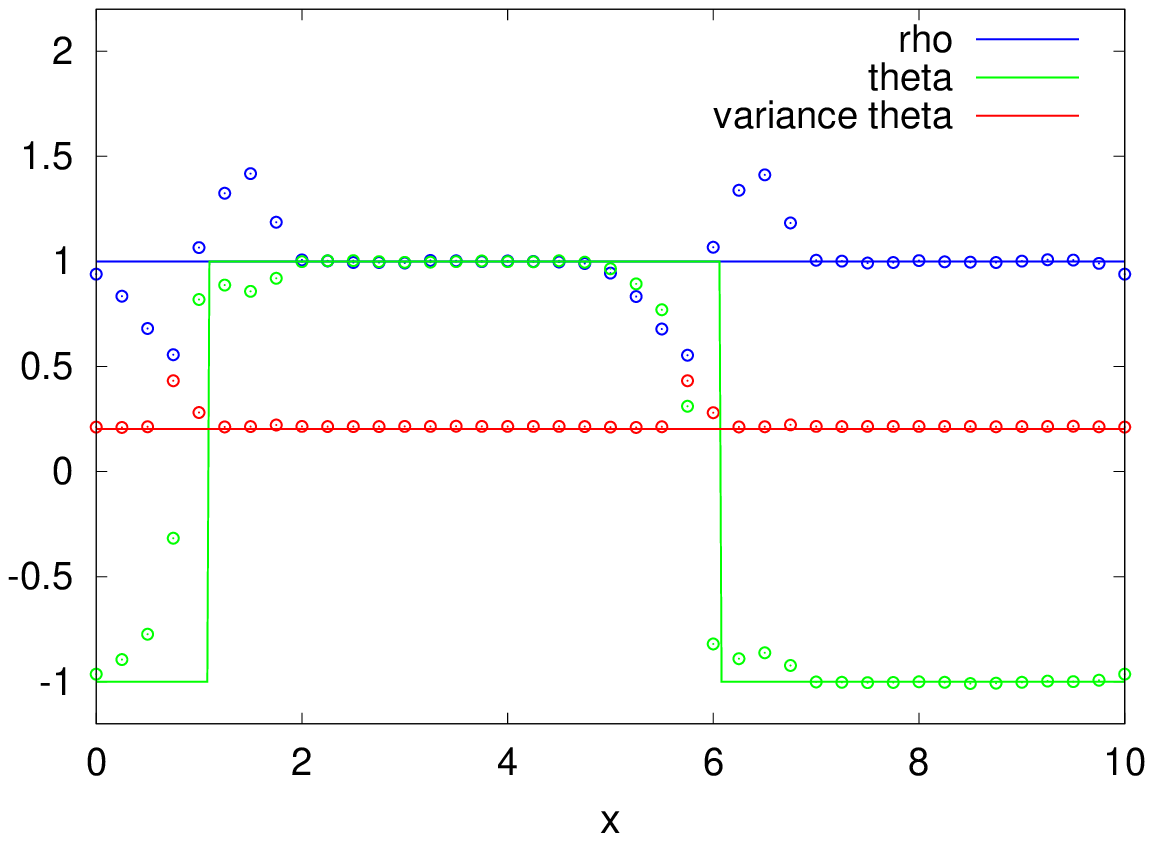}

  \bigskip

  \centering {\it Splitting method}\\
  \centering \includegraphics[scale=.8]{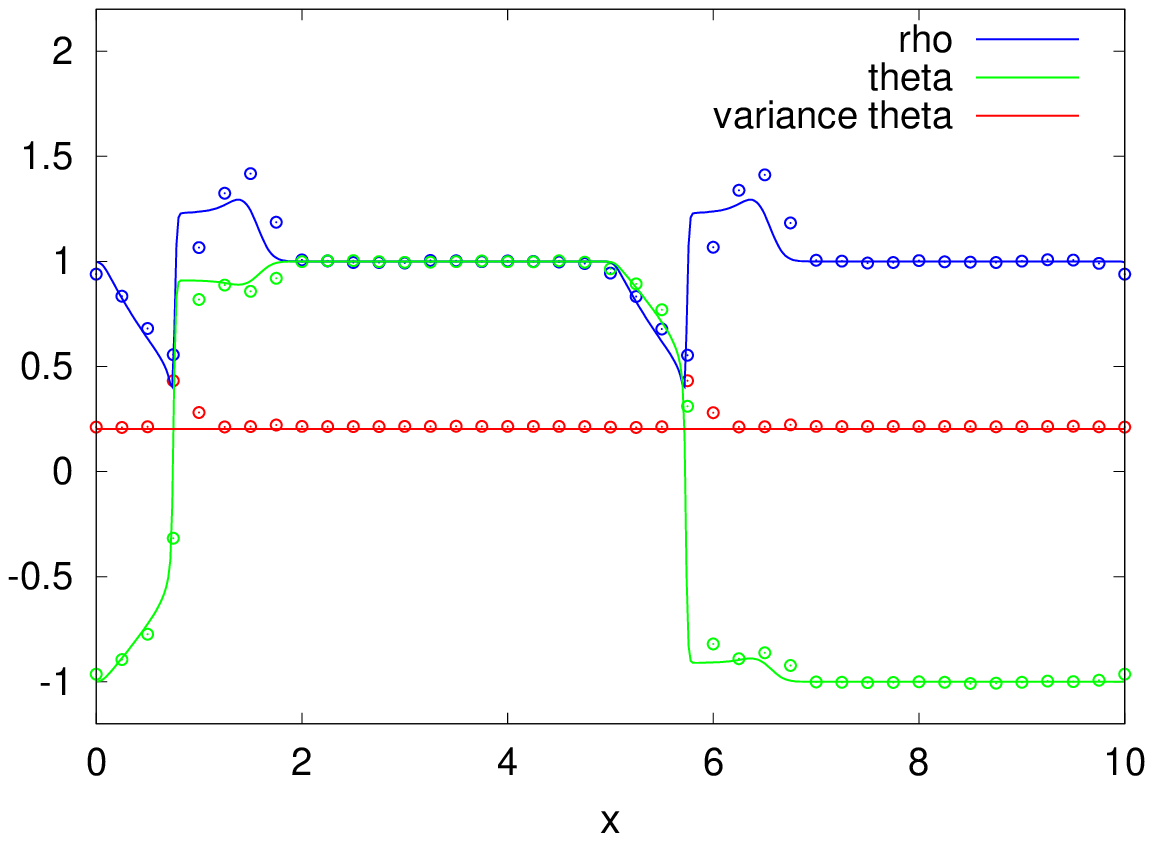}
  
  \caption{The solution of the Riemann problem (\ref{eq:theta_pm}) computed with the
    conservative method (top), the splitting method (down) and with particle simulations
    (dots). Numerical parameters for the macroscopic model: $\Delta t=.01,\,\Delta x=.025$
    (CFL=0.416). Numerical parameters for the particle simulations: $N=10^6$ particles,
    $\Delta t=.01$, $\ep=1/10$, $R=.5$, $Lx=10$, $Ly=1$. We take a mean over $100$
    simulations. The simulations are run during 2 time units.}
  \label{fig:theta_pm}
\end{figure}

\clearpage

\section{Conclusion}

In this work, we have numerically studied both the microscopic Vicsek model and its
macroscopic version \cite{degond2007ml}. Due to the geometric constraint that the velocity
should be of norm one, the standard theory of hyperbolic systems is not
applicable. Therefore, we have proposed several numerical schemes to solve it. By
comparing the numerical simulations of the microscopic and macroscopic equations, it
appears that the scheme based on a relaxation formulation of the macroscopic model, used in
conjunction with a splitting method is in good agreement with particle simulations. The
other schemes do not show a similar good agreement. In particular, with an initial
condition given by a contact discontinuity, the splitting method and the microscopic model
provide a similar solution which turn to be much more complex than what we could be
expected.

These results confirm the relevance of the macroscopic Vicsek model. Since the CPU time is
much lower with the macroscopic equation, the macroscopic Vicsek model is an effective
tool to simulate the Vicsek dynamics in a dense regime of particles.

Many problems are still open concerning the macroscopic Vicsek model. We have seen that
the splitting method gives results which are in accordance with particle simulations. But,
we have to understand why this particular scheme captures well the particle dynamics
better than the other schemes. Since the macroscopic equation has original
characteristics, this question is challenging. Another point concerns the particle
simulations. We have seen that the particles density has a strong effect on the variance
of the velocity distribution. When the density is low, the variance is larger. The
macroscopic equation does not capture this effect since the variance of the distribution
is constant. Works in progress aims at taking into account this density effect.

\newpage

\appendix

\section{The coefficients $c_1$, $c_2$ and $\lambda$}
\setcounter{equation}{0}
\label{sec:coeff}

The analytical expression of the coefficient $c_1$ involved the distribution of the local
equilibrium $M_{\Omega}$ (\ref{eq:M_von_mises}).  The two other coefficients ($c_2$ and
$\ld$) involve also the solution $g$ of the following elliptic equation \cite{degond2007ml}:
\begin{equation}
  \label{eq:el_fort}
  -(1-x^2)\,\partial_x [ \expo^{x/d} (1-x^2) \partial_x g] + \expo^{x/d} g =
  -(1-x^2)^{3/2} \expo^{x/d},
\end{equation}
on the interval $x \in (-1,1)$. 

If we define the function $h = \frac{g}{\sqrt{1-x^2}}$ and $M(x)=\expo^{\frac{x}{d}}$,
these macroscopic coefficients can be written as:
\begin{eqnarray}
  \label{eq:c_1}
  c_1 &=& < \cos \theta >|_{M} = \frac{ \int_0^{\pi} \cos \theta\,M(\cos \theta) \sin
    \theta\,d\theta} {\int_0^{\pi} M(\cos \theta) \sin \theta\,d\theta},\\
  \label{eq:c_2}
  c_2 &=& < \cos \theta >|_{\sin^2 \theta\, h M} = \frac{ \int_0^{\pi} \cos
    \theta\,\sin^2 \theta\, h(\cos \theta) M(\cos \theta) \sin
    \theta\,d\theta} {\int_0^{\pi} \sin^2 \theta\, h(\cos \theta) M(\cos \theta) \sin
    \theta\,d\theta},\\
  \label{eq:ld}
  \ld &=& d.
\end{eqnarray}
In the above expressions, we can see that $0\leq c_1,c_2 \leq 1$.

Now we are going to explore two asymptotics of $g$ when the parameter $d$ is small or large.

\begin{lemma}
  \label{lem:g}
  Let $g$ be the solution of equation (\ref{eq:el_fort}). We have the asymptotics:
  \begin{eqnarray}
    \label{eq:g_d_low}
    g &\stackrel{d\rightarrow 0}{\sim}& d\left[\text{asin}(x) - \frac{\pi}{2}\right] +
    o(d),\\
    \label{eq:g_d_large}
    g &\stackrel{d\rightarrow \infty}{\sim}& -\frac{1}{2}\,\sqrt{1-x^2} +
    \frac{1}{12\,d} x \sqrt{1-x^2} + o\left(\frac{1}{d}\right).
  \end{eqnarray}
\end{lemma}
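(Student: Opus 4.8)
The plan is to treat equation (\ref{eq:el_fort}) as a singular perturbation problem in the parameter $d$ and extract the leading behaviour in each of the two regimes $d\to 0$ and $d\to\infty$ by a direct asymptotic ansatz, then verify it by substitution. For the small-$d$ asymptotics, I would first rescale the unknown, writing $g = d\,\tilde g + o(d)$, and plug this into (\ref{eq:el_fort}). Dividing through by $\expo^{x/d}$ is not innocent since $\expo^{x/d}$ varies rapidly, so the cleaner route is to observe that the left-hand side operator, after the substitution $h=g/\sqrt{1-x^2}$ used later in the appendix, takes a Sturm--Liouville form whose weight concentrates near $x=1$ as $d\to 0$. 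One then checks that the candidate $g_0 = d[\operatorname{asin}(x)-\tfrac\pi2]$ makes the two leading terms balance: compute $(1-x^2)\partial_x g_0 = d\sqrt{1-x^2}$, so $\expo^{x/d}(1-x^2)\partial_x g_0 = d\,\expo^{x/d}\sqrt{1-x^2}$, differentiate, multiply by $-(1-x^2)$, and compare with the right-hand side $-(1-x^2)^{3/2}\expo^{x/d}$; the dominant contribution comes from differentiating the $\expo^{x/d}$ factor, which produces $\tfrac1d\expo^{x/d}$ and cancels the explicit $d$, matching the right-hand side to leading order, while the $\expo^{x/d}g_0$ term and the derivative-of-$\sqrt{1-x^2}$ term are both $o(1)$ relative to it. The boundary behaviour (finiteness of $g$ at $x=\pm1$) fixes the additive constant $-\tfrac\pi2$.

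For the large-$d$ asymptotics I would expand $\expo^{x/d} = 1 + x/d + O(1/d^2)$ and correspondingly seek $g = g_0 + \tfrac1d g_1 + o(1/d)$. At order $1$ the equation reduces to $-(1-x^2)\partial_x[(1-x^2)\partial_x g_0] + g_0 = -(1-x^2)^{3/2}$, and one checks that $g_0=-\tfrac12\sqrt{1-x^2}$ solves it: indeed $(1-x^2)\partial_x g_0 = \tfrac12 x\sqrt{1-x^2}$ and a short computation gives $-(1-x^2)\partial_x[\tfrac12 x\sqrt{1-x^2}] = \tfrac12(1-x^2)^{3/2}(\text{up to the right constant})$, which combines with $g_0$ to reproduce the source. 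At order $1/d$, collecting the terms linear in $x/d$ coming from expanding each $\expo^{x/d}$, one obtains a linear inhomogeneous ODE for $g_1$ with source built from $g_0$ and $x$; solving it (again imposing regularity at $x=\pm1$) yields $g_1 = \tfrac1{12} x\sqrt{1-x^2}$.

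I expect the main obstacle to be making the small-$d$ argument rigorous rather than merely formal: because the weight $\expo^{x/d}$ degenerates exponentially, the naive ansatz balances the equation only in an inner region near $x=1$, and one must argue that the outer contribution and the error terms are genuinely $o(d)$ uniformly, e.g.\ via an energy estimate on $g - g_0$ using the Sturm--Liouville structure and a maximum-principle-type bound for the operator $L_d u = -(1-x^2)\partial_x[\expo^{x/d}(1-x^2)\partial_x u] + \expo^{x/d}u$. The large-$d$ case is comparatively benign since the operator converges to a nondegenerate limit and the expansion is a regular perturbation series, so there the work is just the two explicit ODE solves and a routine remainder estimate.
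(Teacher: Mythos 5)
Your overall strategy is the same as the paper's: expand the operator in $d$, identify the reduced equation in each regime, and verify the explicit candidates $-\tfrac12\sqrt{1-x^2}$ and $\tfrac1{12}x\sqrt{1-x^2}$ by substitution. The large-$d$ half is fine and matches the paper's computation (the paper writes the correction as $v=d(g-g_0)$ with source $-(1-x^2)\partial_x g_0=-\tfrac12 x\sqrt{1-x^2}$, which is exactly what your $\expo^{x/d}=1+x/d+\dots$ expansion produces).

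The genuine gap is in the small-$d$ regime: your determination of the additive constant is wrong. Writing $g=d\widetilde g$, the reduced equation is $(1-x^2)\partial_x\widetilde g=(1-x^2)^{1/2}$, whose general solution is $\widetilde g=\mathrm{asin}(x)+c$; your substitution check is insensitive to $c$, since changing $c$ only perturbs the $O(d)$ residual. ``Finiteness at $x=\pm1$'' fixes nothing, because $\mathrm{asin}(x)+c$ is bounded on $[-1,1]$ for every $c$. The actual boundary condition inherited from the space $V$ (namely $(1-x^2)^{-1/2}g\in L^2$, forcing $g(\pm1)=0$) cannot be satisfied at \emph{both} endpoints by any $\mathrm{asin}(x)+c$: the reduced equation is first order, so a boundary layer must form at one endpoint, and selecting which one (equivalently, selecting $c$) requires a matching or upwinding argument that neither your substitution nor the reduced equation provides. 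The paper itself only proves $c\le-\tfrac{\pi}{2}$, using the sign condition $g\le0$ coming from the variational formulation, and explicitly states that the equality $c=-\tfrac{\pi}{2}$ is observed numerically but remains open. So your proposal asserts as established (with an invalid justification) precisely the one point that the paper's own formal proof cannot close; at minimum you should replace the finiteness argument by the negativity argument and downgrade the conclusion to $c\le-\tfrac{\pi}{2}$, or else supply the missing boundary-layer analysis.
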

\begin{proof_formal}
  Introducing the Hilbert space:
  \begin{displaymath}
    V = \{ g \, | \, (1-\mu^2)^{-1/2} g \in L^2(-1,1), \quad (1-\mu^2)^{1/2} \partial_\mu g \in L^2(-1,1) \}
  \end{displaymath}
  we have already seen in \cite{degond2007ml} that there exists a unique solution $g$ of
  (\ref{eq:el_fort}). Moreover this solution is negative.
  
  To derive the asymptotic behavior of $g$ depending on $d$, we first develop (\ref{eq:el_fort}):
  \begin{equation}
    \label{eq:el_fort_dev}
    \partial_x [ (1-x^2) \partial_x g] + (1-x^2)\,\frac{1}{d} \partial_x g -
    \frac{g}{1-x^2} = (1-x^2)^{1/2}. 
  \end{equation}
  When $d\rightarrow 0$, we have:
  \begin{displaymath}
    \partial_x g =0
  \end{displaymath}
  on the interval $[-1+\ep,1-\ep]$ for all $\ep>0$. Since $g$ belongs to $V$, we also have
  the boundary condition $g(-1)=g(1)=0$, so $g$
  converges to $0$ when $d\rightarrow 0$.\\
  To derive the next order of convergence in the limit $d\rightarrow 0$, we normalize $g$
  with $g = d \widetilde{g}$, which gives:
  \begin{eqnarray}
    \label{eq:d_ren}
    d\,\partial_x [ (1-x^2) \partial_x \widetilde{g}] + (1-x^2) \partial_x \widetilde{g} -
    d\,\frac{\widetilde{g}}{1-x^2} = (1-x^2)^{1/2}. 
  \end{eqnarray}
  In the limit $d\rightarrow 0$, we deduce that:
  \begin{eqnarray}
    \label{eq:d_0_ren}
    (1-x^2) \partial_x \widetilde{g} = (1-x^2)^{1/2},
  \end{eqnarray}
  which has an explicit solution: $\widetilde{g} = \text{asin}(x) + c$. Since
  $\widetilde{g}\leq 0$, we have $c\leq -\frac{\pi}{2}$. Numerically, we find 
  that $c=-\frac{\pi}{2}$ but the proof is still open. This formally proves (\ref{eq:g_d_low}).

  \medskip
  
  When $d\rightarrow +\infty$, (\ref{eq:el_fort_dev}) gives:
  \begin{eqnarray}
    \label{eq:d_inf}
    \partial_x [ (1-x^2) \partial_x g_0] - \frac{g_0}{1-x^2} = (1-x^2)^{1/2}. 
  \end{eqnarray}
  A simple calculation shows that $g_0=-\frac{1}{2}\,\sqrt{1-x^2}$ is a solution of
  (\ref{eq:d_inf}).\\
  To derive the next order of convergence, we look at the difference $v = d(g-g_0)$, which
  satisfies (see (\ref{eq:el_fort_dev}) and (\ref{eq:d_inf})):
  \begin{displaymath}
    \partial_x [ (1-x^2) \partial_x v] + (1-x^2)\,\frac{1}{d} \partial_x v -
    \frac{v}{1-x^2} = -(1-x^2)\,\partial_x g_0.
  \end{displaymath}
  In the limit $d \rightarrow +\infty$,  $v$ satisfies:
  \begin{eqnarray}
    \label{eq:g_1_inf}
    \partial_x [ (1-x^2) \partial_x v]  -
    \frac{v}{1-x^2} = -\frac{1}{2} x\sqrt{1-x^2}.
  \end{eqnarray}
  A simple calculation shows that $v = \frac{1}{12} x \sqrt{1-x^2}$ is solution of
  (\ref{eq:g_1_inf}). Therefore we formally have the expression (\ref{eq:g_d_large}) in the proposition.  

\end{proof_formal}

In figure \ref{fig:g}, we compute numerically the function $g$ (\ref{eq:el_fort}). We
use a finite element method with a space step $\Delta x= 10^{-3}$. The two asymptotics of
$g$ when $d\rightarrow 0$ and $d\rightarrow +\infty$ are computed in figure \ref{fig:g1}.

\begin{proposition}
  \label{ppo:c_12}
  The two coefficients $c_1$ and $c_2$ defined (resp.) by the equations (\ref{eq:c_1}) and
  (\ref{eq:c_2}) satisfy the following asymptotics:
  \begin{eqnarray}
    \label{eq:c_1_asymp_0}
    c_1 &\stackrel{d \rightarrow 0}{\sim}& 1-d + O(d^2), \\
    \label{eq:c_1_asymp_inf}
    c_1 &\stackrel{d \rightarrow +\infty}{\sim}& \frac{1}{3d} + O\left(\frac{1}{d^2}\right),\\
    \label{eq:c_2_asymp_inf}
    c_2 &\stackrel{d \rightarrow \infty}{\sim}& \frac{1}{6d} + o\left(\frac{1}{d}\right).
  \end{eqnarray}
\end{proposition}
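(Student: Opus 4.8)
The plan is to turn the angular averages \eqref{eq:c_1}--\eqref{eq:c_2} into explicit integrals over $(-1,1)$ and then, for $c_1$, evaluate them exactly, while for $c_2$ we substitute the large-$d$ behaviour of $g$ provided by Lemma~\ref{lem:g}.

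First I would perform the change of variables $x=\cos\theta$ in \eqref{eq:c_1}--\eqref{eq:c_2}. Using $M(x)=\expo^{x/d}$ and $(1-x^2)h(x)=\sqrt{1-x^2}\,g(x)$ (since $h=g/\sqrt{1-x^2}$), this gives
\[
c_1=\frac{\int_{-1}^{1}x\,\expo^{x/d}\,dx}{\int_{-1}^{1}\expo^{x/d}\,dx},
\qquad
c_2=\frac{\int_{-1}^{1}x\sqrt{1-x^2}\,g(x)\,\expo^{x/d}\,dx}{\int_{-1}^{1}\sqrt{1-x^2}\,g(x)\,\expo^{x/d}\,dx}.
\]
For $c_1$ both integrals are elementary: with $s=1/d$, integration by parts yields $\int_{-1}^1\expo^{sx}\,dx=2s^{-1}\sinh s$ and $\int_{-1}^1 x\,\expo^{sx}\,dx=2s^{-1}\cosh s-2s^{-2}\sinh s$, hence $c_1=\coth(1/d)-d$, i.e.\ the Langevin function evaluated at $1/d$. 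The asymptotics \eqref{eq:c_1_asymp_0} and \eqref{eq:c_1_asymp_inf} then drop out of $\coth s=1+2\expo^{-2s}+O(\expo^{-4s})$ as $s\to\infty$ (so the remainder in \eqref{eq:c_1_asymp_0} is in fact exponentially small) and of $\coth s=s^{-1}+s/3+O(s^3)$ as $s\to0$.

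For $c_2$ I would insert into both integrals the expansion \eqref{eq:g_d_large}, $g(x)=-\tfrac12\sqrt{1-x^2}+\tfrac1{12d}\,x\sqrt{1-x^2}+o(1/d)$, together with $\expo^{x/d}=1+x/d+O(1/d^2)$. Collecting orders gives $\sqrt{1-x^2}\,g(x)\,\expo^{x/d}=-\tfrac12(1-x^2)-\tfrac{5}{12d}x(1-x^2)+o(1/d)$, and multiplying by $x$ gives the numerator integrand. With the elementary moments $\int_{-1}^1(1-x^2)\,dx=\tfrac43$, $\int_{-1}^1 x(1-x^2)\,dx=0$, $\int_{-1}^1 x^2(1-x^2)\,dx=\tfrac4{15}$, the denominator tends to $-\tfrac23$ (its $O(1/d)$ term vanishing by oddness) while the numerator equals $-\tfrac{5}{12d}\cdot\tfrac4{15}+o(1/d)=-\tfrac1{9d}+o(1/d)$; dividing yields $c_2=\tfrac1{6d}+o(1/d)$, which is \eqref{eq:c_2_asymp_inf}.

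The only genuine difficulty is the same one that already affects Lemma~\ref{lem:g}: the expansion of $g$ is obtained formally, so a rigorous argument must control the remainder $g-g_0-\tfrac1{12d}x\sqrt{1-x^2}$ in a norm --- e.g.\ $L^1(-1,1)$ against the bounded weights $\sqrt{1-x^2}$ and $x\sqrt{1-x^2}$ --- strong enough to guarantee an $o(1/d)$ contribution to the integrals. Granting such a bound (and noting that the denominator of $c_2$ stays bounded away from $0$, since $g<0$ on $(-1,1)$, so no small-denominator issue arises), the computation is routine; accordingly the proof is, like that of Lemma~\ref{lem:g}, a formal one.
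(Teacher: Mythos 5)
Your proof is correct and follows essentially the same route as the paper: the explicit evaluation $c_1=\coth(1/d)-d$ followed by Taylor expansion, and for $c_2$ the substitution of the large-$d$ expansion of $g$ from Lemma~\ref{lem:g} into \eqref{eq:c_2}. You carry out the $c_2$ computation in more detail than the paper does (which merely says to insert the expansion), and your caveat that the argument inherits the formal character of Lemma~\ref{lem:g} is apt.
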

\begin{proof}
  We have an explicit expression for the coefficient $c_1$ using the change of unknowns
  $x=\cos(\theta)$:
  \begin{eqnarray}
    c_1 &=& \text{coth}\left(\frac{1}{d} \right)-d,
  \end{eqnarray}
  where $\text{coth}(s) = \frac{\expo^{s}+\expo^{-s}}{\expo^{s}-\expo^{-s}}$. The
  expressions of (\ref{eq:c_1_asymp_0}) and (\ref{eq:c_1_asymp_inf}) are simply deduced by
  a Taylor expansion of the last expression.

  For the coefficient $c_2$, we insert the development of $g$ (\ref{eq:g_d_large}) in
  expression (\ref{eq:c_2}).
\end{proof}
\begin{remark}
  The behavior of $c_2$ when $d\rightarrow 0$ is more difficult to analyze. The density
  probability $\sin \theta\,h\,M$ used in formula (\ref{eq:c_2}) becomes singular in this
  limit. Nevertheless, due to the expression of $M_{\Omega}$, the density converges to a
  Dirac delta at $0$ which explains why $c_2 \stackrel{d \rightarrow 0}{\sim} 1$. To
  capture the next order of convergence, we need to find the second order correction of
  $g$ in the limit $d\rightarrow 0$ which is not available. However, numerically we find
  that: 
  \begin{displaymath}
    c_2 \stackrel{d \rightarrow 0}{\sim} 1-2d +o(d).
  \end{displaymath}
\end{remark}
In figure \ref{fig:div_c1}, we numerically compute the coefficients $c_2/c_1$, $\ld/c_1$
and their asymptotics.

\begin{figure}[hp]
  \centering
  \includegraphics[scale=.8]{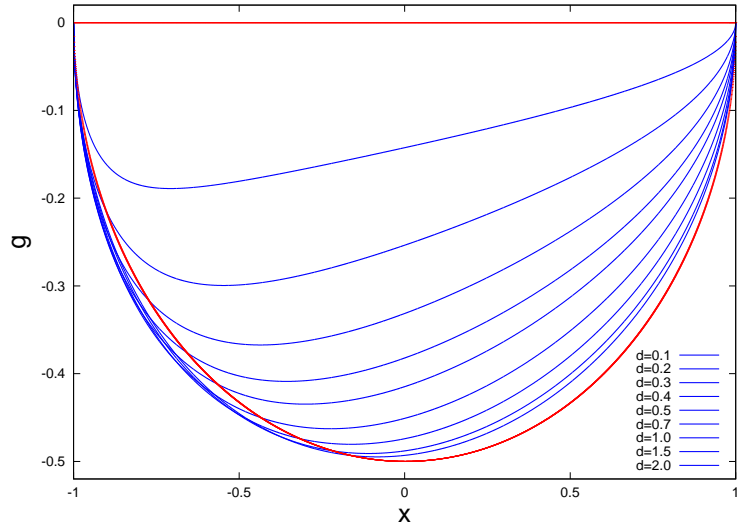}
  \caption{The numerical solution $g$ of (\ref{eq:el_fort}) for different values of the
    parameter $d$. We have the following asymptotics (see lemma \ref{lem:g}):
    $g\stackrel{d \rightarrow 0}{\longrightarrow} 0$ and $g\stackrel{d \rightarrow
      \infty}{\longrightarrow} -\frac{1}{2}\sqrt{1-x^2}$.}
  \label{fig:g}
\end{figure}

\begin{figure}[p]
  \includegraphics[scale=.56]{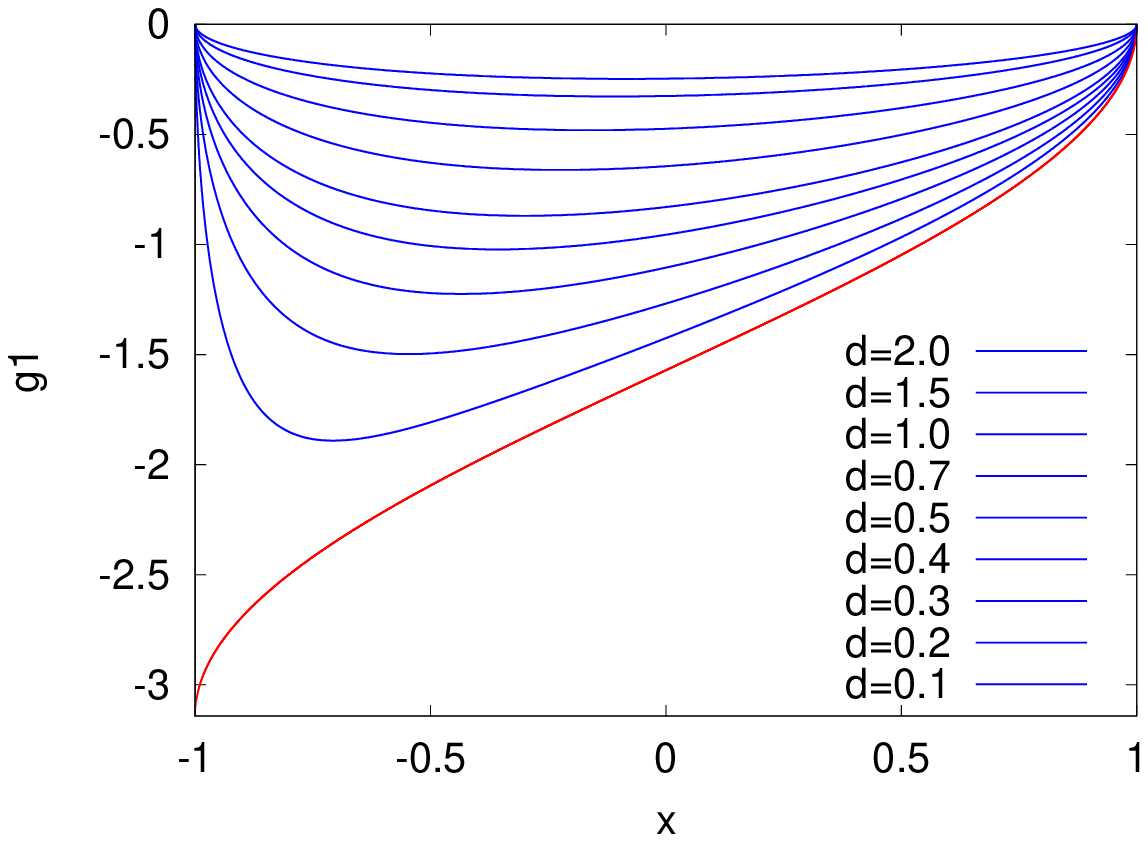}
  \includegraphics[scale=.56]{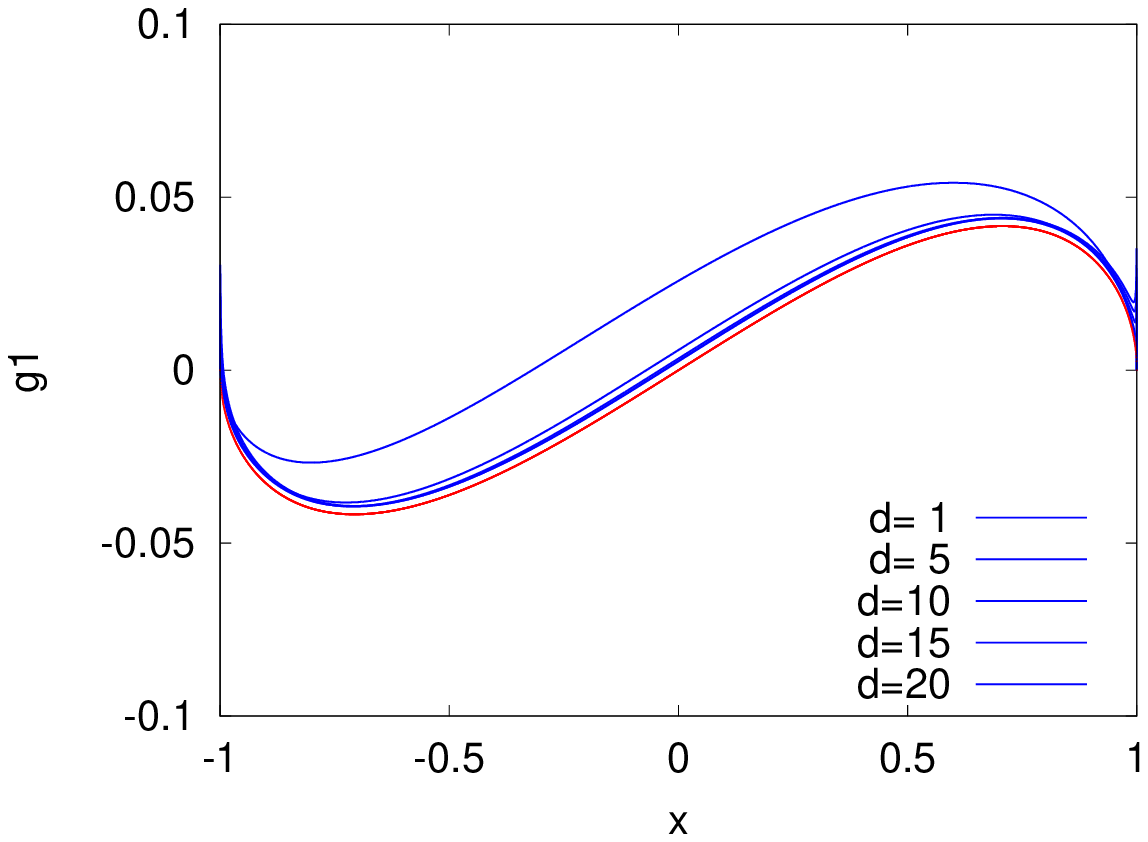}
  \caption{Left figure: the first correction $g_1$ of $g$ when $d\rightarrow 0$. The red
    curve is the theoretical asymptotic limit: $g_1 = g/d \stackrel{d\rightarrow
      0}{\longrightarrow} \text{asin}(x) - \frac{\pi}{2}$ (see lemma \ref{lem:g}).
    Right figure: the first correction $g_1$ of $g$ when $d \rightarrow +\infty$. The red
    curve is the theoretical asymptotic: $g_1 = d(g + \frac{1}{2}\,\sqrt{1-x^2})
    \stackrel{d\rightarrow 0}{\longrightarrow} \frac{1}{12} x \sqrt{1-x^2}$ (see lemma
    \ref{lem:g}).}
  \label{fig:g1}
\end{figure}

\begin{figure}
  \includegraphics[scale=.37]{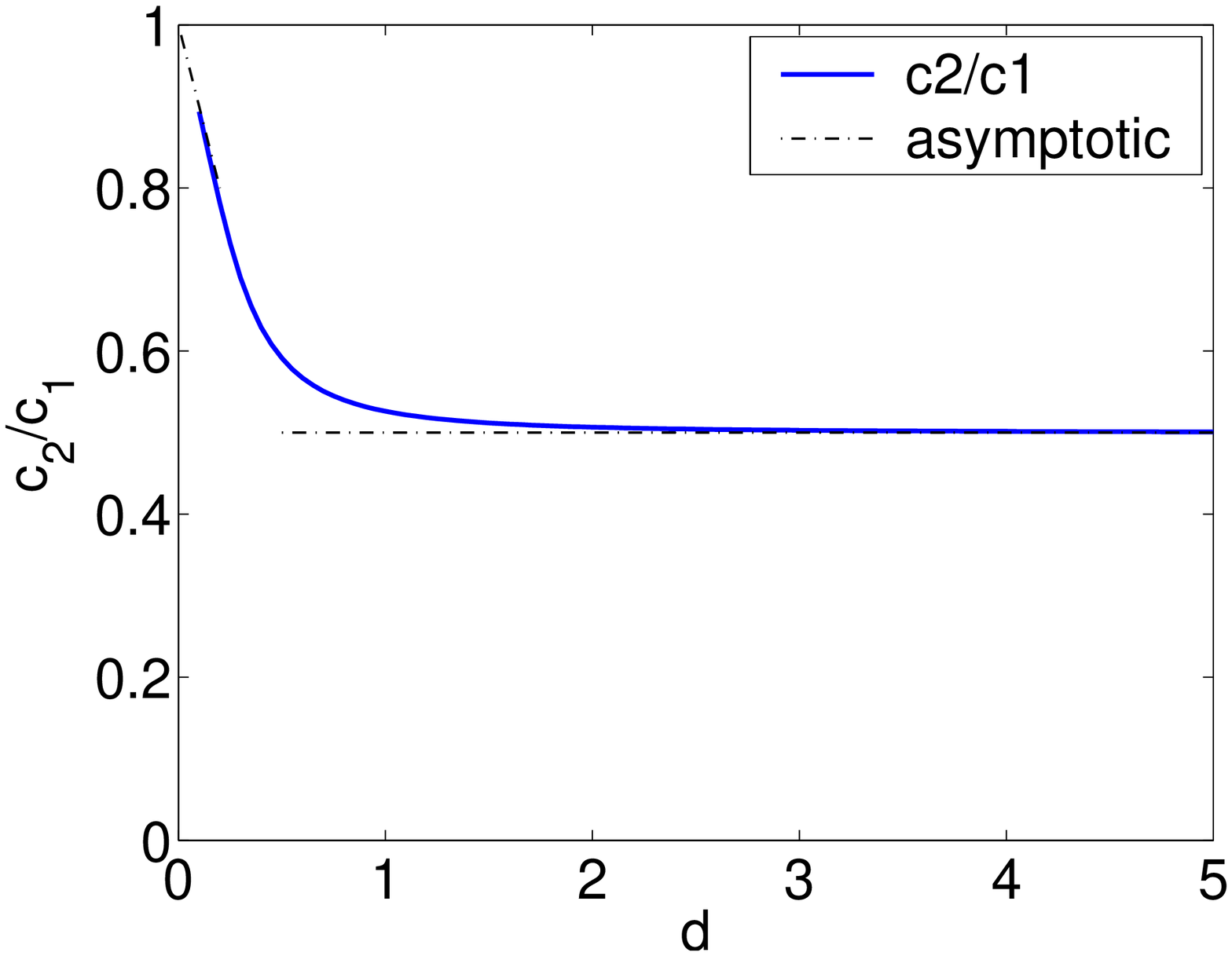}
  \includegraphics[scale=.37]{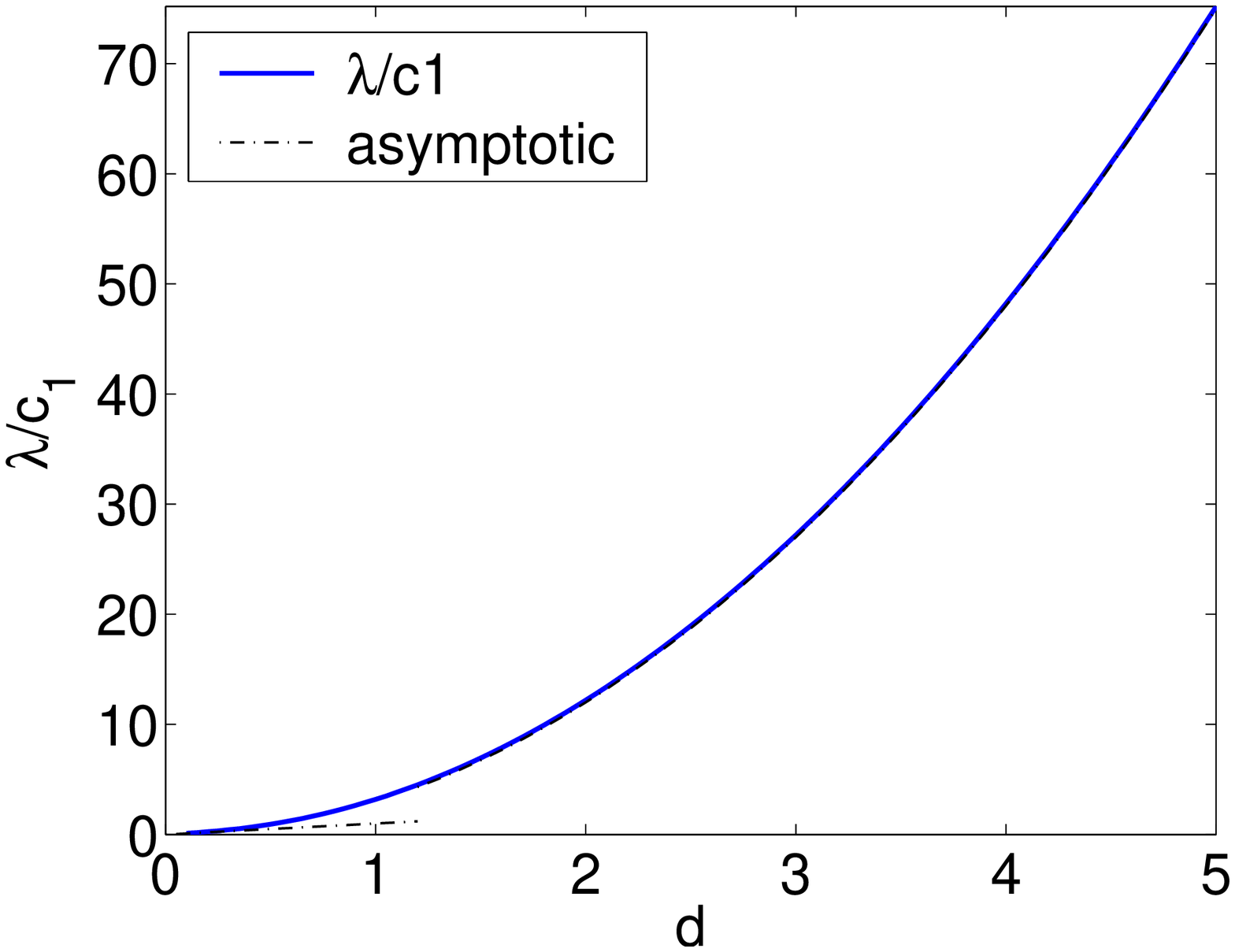}
  \caption{The ratio $c_2/c_1$ and $\ld/c_1$ (solid lines) and their two asymptotics (dashed lines) (see
    proposition \ref{ppo:c_12}) computed with $\Delta x=10^{-3}$.}
  \label{fig:div_c1}
\end{figure}

\clearpage

\section{Special solution of the MV model}
\label{sec:moulin}
\setcounter{equation}{0}

In this appendix, a vortex configuration is exhibited as a stationary solution of the MV
model (\ref{eq:macro_rho})-(\ref{eq:macro_omega})-(\ref{eq:macro_constraint}) in dimension
2. A stationary state of the MV model has to satisfy:
\begin{equation}
  \label{eq:stationnary}
  \begin{array}{ll}
    & \nabla_{x} \cdot (\rho \Omega)  = 0, \\
    & c(\Omega \cdot \nabla_{x}) \Omega +  \lambda  \,  (\mbox{Id} - \Omega \otimes
    \Omega) \frac{\nabla_{x} \rho}{\rho} = 0.
  \end{array}
\end{equation}
Introducing the polar coordinates, $\rho(r,\theta)$, $\Omega(r,\theta) = f_r(r,\theta) \vec{e}_r +
f_\theta(r,\theta)\vec{e}_\theta$, where $\vec{e}_r =
\left(\cos\theta,\sin\theta\right)^{T}$ and $\vec{e}_\theta = \left(- \sin \theta,\cos
  \theta\right)^{T}$, we are able to formulate the proposition:

\begin{proposition}
  \label{ppo:vortex}
  The following initial condition is a stationary state of the MV model (\ref{eq:stationnary}):
  \begin{equation}
    \label{eq:init_vortex}
    \rho(r) = C\, r^{c/\ld} \qquad, \qquad \Omega = \vec{e}_{\theta},  
  \end{equation}
  where $C$ is a constant.
\end{proposition}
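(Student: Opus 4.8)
The plan is to substitute the proposed ansatz directly into the stationary system \eqref{eq:stationnary} and verify both equations. First I would set $\Omega = \vec e_\theta = (-\sin\theta,\cos\theta)^T$ and $\rho(r) = C\,r^{c/\ld}$, and note that both are independent of the angular variable, so all angular derivatives act only on $\vec e_r$ and $\vec e_\theta$. The first (mass) equation is handled by computing the divergence in polar coordinates: since the radial component $f_r$ of $\Omega$ vanishes and the angular component $f_\theta = 1$ is independent of $\theta$, one gets $\nabla_x\cdot(\rho\Omega) = \frac1r\partial_\theta(\rho f_\theta) = 0$ immediately. So the continuity equation is satisfied for \emph{any} radial density profile; the specific power law is forced only by the second equation.

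For the momentum equation I would compute the two terms separately. Using $\partial_\theta \vec e_\theta = -\vec e_r$ and $\partial_r\vec e_\theta = 0$, the convective term becomes $(\Omega\cdot\nabla_x)\Omega = \tfrac1r\partial_\theta \vec e_\theta = -\tfrac1r\vec e_r$ — this is just the centripetal acceleration of uniform circular motion. For the pressure term, $\nabla_x\rho = \rho'(r)\vec e_r$, so $(\mathrm{Id}-\Omega\otimes\Omega)\nabla_x\rho = \rho'(r)\,(\mathrm{Id} - \vec e_\theta\otimes\vec e_\theta)\vec e_r = \rho'(r)\vec e_r$ since $\vec e_r\perp\vec e_\theta$. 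Plugging into the second line of \eqref{eq:stationnary} gives $-\tfrac{c}{r}\vec e_r + \ld\,\tfrac{\rho'(r)}{\rho(r)}\vec e_r = 0$, i.e. the scalar ODE $\rho'(r)/\rho(r) = c/(\ld r)$, whose solution is exactly $\rho(r) = C\,r^{c/\ld}$. This closes the verification.

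There is no real obstacle here — the computation is a short exercise in polar-coordinate vector calculus — but the one place to be careful is the bookkeeping of the projector $(\mathrm{Id}-\Omega\otimes\Omega)$ and the fact that it annihilates the $\vec e_\theta$-component of any vector while leaving the $\vec e_r$-component untouched; this is what makes the pressure term purely radial and lets it balance the purely radial centripetal term. One might also remark that $|\Omega| = |\vec e_\theta| = 1$, so the geometric constraint \eqref{eq:macro_constraint} holds trivially, and that the solution is only defined away from $r=0$ (and requires $C>0$ for positivity of $\rho$). These are minor points to mention rather than difficulties to overcome.
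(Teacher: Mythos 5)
Your verification is correct and follows essentially the same route as the paper's proof: the divergence vanishes because $\rho\Omega$ has no radial component and no angular dependence, the projector acts as the identity on the purely radial gradient $\nabla_x\rho = \rho'(r)\vec e_r$, and the identity $\partial_\theta\vec e_\theta = -\vec e_r$ reduces the momentum balance to $\rho'/\rho = c/(\ld r)$, which integrates to the stated power law. Nothing to add.
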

\begin{proof}
  With the expression of $\rho$ and $\Omega$ given by (\ref{eq:init_vortex}), the
  divergence of the mass is zero and the gradient of $\rho$ is orthogonal to $\Omega$,
  therefore the system (\ref{eq:stationnary}) reduces to:
  \begin{equation}
    \label{eq:reduc}
    c(\Omega \cdot \nabla_{x}) \Omega   +  \lambda \frac{\nabla_{x} \rho}{\rho} = 0,
  \end{equation}
  or in polar coordinates:
  \begin{displaymath}
    c \frac{1}{r} \partial_\theta \,\vec{e}_\theta \, + \, \ld
    \frac{\rho'(r)}{\rho(r)} \vec{e}_r = 0.
  \end{displaymath}
  Since $\partial_\theta \vec{e}_\theta = -\vec{e}_r$, we can easily check that the solution
  of this equation is given by $\rho(r) = C\, r^{c/\ld}$.
\end{proof}

\section{Numerical schemes for particle simulations}
\label{sec:num_scheme_ps}
\setcounter{equation}{0}

In the limit $\ep\rightarrow 0$, an explicit Euler method for the differential system
(\ref{Cont_pos_bis})-(\ref{Cont_orient_bis}) imposes a restriction time step condition of
$\frac{1}{\ep} \Delta t<1$. Therefore, we develop an implicit scheme for this system.  The
idea is to go back to the original Vicsek model (see \cite{degond2007ml}). We use the
formulation:
\begin{equation}
  \label{eq:mid_point}
  \frac{\omega^{n+1}-\omega^n}{\Delta t} = (\mbox{Id} - \omega^{n+1/2} \otimes
  \omega^{n+1/2})(\bar \omega^n - \omega^n)
\end{equation}
where $\omega^{n+1/2} = \frac{\omega^{n}+\omega^{n+1}} {|\omega^{n}+\omega^{n+1}|}$ and
$\bar \omega^n$ is the average velocity (\ref{CV_moy_3_bis}). When $\Delta t=1$, we recover
exactly the original Vicsek model \cite{vicsek1995ntp}. (\ref{eq:mid_point}) can in fact
be solved explicitly. First, we have to remember that $\omega^{n+1}$ belongs to the unit circle
(i.e. $|\omega^{n+1}|=1$). Then we use that $\omega^{n+1}-\omega^n$ is the orthogonal
projection of $(\bar \omega^n - \omega^n)\Delta t$ on the orthogonal plan of
$\omega^{n+1/2}$. Therefore $\omega^{n+1}$ and $\omega_{n}$ are on the circle
$\mathcal{C}$ with center $B = \omega_{n} + \frac{(\bar \omega^n - \omega^n)\Delta t}{2}$ and radius
$\left|\frac{(\bar{\omega}^n - \omega^n)\Delta t}{2}\right|$ (see figure \ref{fig:tech_circ}). This
fully defines $\omega^{n+1}$ since $\omega^{n}$ and $\omega^{n+1}$ are the two
intersection points of the unit circle and the circle $\mathcal{C}$. Denoting $\theta$ the
angle of the unit vector $\omega$, we easily check that we have in terms of angles:
\begin{displaymath}
  \theta^{n+1} = \theta^n + 2 \widehat{(\omega^n,B)}.
\end{displaymath}
To take into account the effect of the noise, we simply add a random variable:
\begin{equation}
  \label{eq:discrete_micro_angle}
  \theta^{n+1} = \theta^n + 2 \widehat{(\omega^n,B)} + \sqrt{2d\,\Delta t}\, \epsilon_n
\end{equation}
where $\epsilon_n$ is a standard normal distribution independent of $\theta^n$.

\begin{figure}[ht]
  \centering
  \includegraphics[scale=.6]{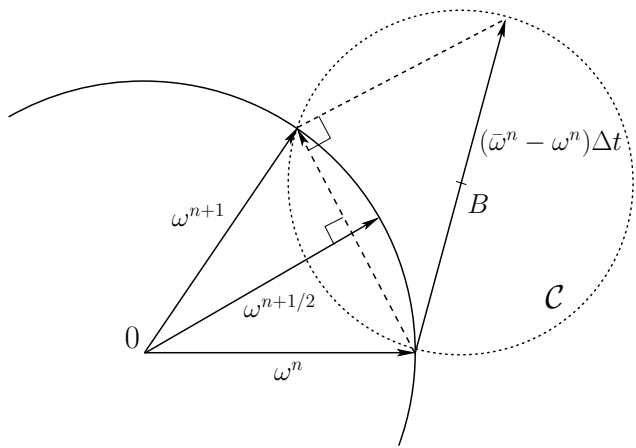}
  \caption{Illustration of the geometric method to solve explicitly equation
    (\ref{eq:mid_point}).}
  \label{fig:tech_circ}
\end{figure}

\medskip

\noindent {\bf Algorithm used to solve a Riemann problem with particles.}

\begin{enumerate}
\item Choose a Riemann problem $(\rho_l,\,\theta_l)$ and $(\rho_r,\,\theta_r)$.
\item Initiate $N$ particles $(x_k,\omega_k)_{k=1..N}$ according to the distributions
  $\rho_l M_{\Omega_l}$ and $\rho_r M_{\Omega_r}$.
\item Let evolve the particles in time using the time-discretization
  (\ref{eq:discrete_micro_angle}) of equation (\ref{Cont_orient_bis}).
\item Compute the mass $\rho$ and the direction of the flux $\Omega$ using
  Particle-In-Cell method \cite{hockney1988csu} in order to compare the simulation with
  the one of the MV model.
\end{enumerate}

\end{document}